\documentclass[12pt]{amsart}

\usepackage{amssymb,latexsym}

\usepackage{enumerate}
\usepackage{amsmath}

\makeatletter

\@namedef{subjclassname@2010}{

  \textup{2010} Mathematics Subject Classification}

\usepackage{geometry}
\newtheorem{proposition}{Proposition}[section]
\newtheorem{theorem}{Theorem}[section]

\newtheorem{lemma}{Lemma}[section]

\newtheorem{thm}{Theorem}[section]
\newcommand{\re}{\textup{Re}}
\newcommand{\im}{\textup{Im}}

\newcommand{\tmop}[1]{\ensuremath{\operatorname{#1}}}
\newcommand{\ex}{\mathbb{E}}
\newtheorem*{theoremvor}{Voronin's universality theorem}
\numberwithin{equation}{section}

\frenchspacing

\textwidth=15.5cm

\textheight=23cm 

\parindent=16pt

\oddsidemargin=0cm

\evensidemargin=0cm

\topmargin=-0.5cm

\begin{document}

\baselineskip=17pt

\title{An effective universality theorem for the Riemann zeta function}

\author{Youness Lamzouri}
\author{Stephen Lester}
\author{Maksym Radziwill}

\address{Department of Mathematics and Statistics,
York University,
4700 Keele Street,
Toronto, ON,
M3J1P3
Canada}

\email{lamzouri@mathstat.yorku.ca}

\address{Department of Mathematics and Statistics,  University of Montreal
Pavillon Andr\'e-Aisenstadt,
PO Box 6128, Centre-ville Station Montreal, 
Quebec H3C 3J7}

\email{sjlester@gmail.com}
\address{Department of Mathematics
McGill University
805 Sherbrooke Street West
Montreal, Quebec H3A 0G4}
\email{maksym.radziwill@gmail.com}

\begin{abstract}
Let $0<r<1/4$, and $f$ be a non-vanishing continuous function in  $|z|\leq r$, that is analytic in the interior. Voronin's
universality theorem asserts that translates of the Riemann zeta function $\zeta(3/4 + z + it)$ can approximate
$f$ uniformly in $|z| < r$ to any given precision $\varepsilon$, and moreover that the set of such $t \in [0, T]$ has measure at least $c(\varepsilon) T$ for some 
 $c(\varepsilon) > 0$, once $T$ is large enough. This was refined by Bagchi who showed that the measure of such $t \in [0,T]$ is $(c(\varepsilon) + o(1)) T$, for all but at most countably many $\varepsilon > 0$. 
Using a completely different approach, we obtain the first effective version of Voronin's Theorem, by showing that in the rate of convergence one can save a small power of the logarithm of $T$. Our method is flexible, and can be generalized to other $L$-functions in the $t$-aspect, as well as to families of $L$-functions in the conductor aspect.

\end{abstract}

\subjclass[2010]{Primary 11M06.}

\thanks{The first and third authors are partially supported by  Discovery Grants from the Natural Sciences and Engineering Research Council of Canada.}

\maketitle

\section{Introduction}

In 1914 Fekete constructed a formal power series $\sum_{n = 1}^{\infty} a_n x^n$ with the following \textit{universal} property: For any continuous function $f$ on $[-1,1]$ (with $f(0) = 0$) and given any $\varepsilon > 0$ there exists an integer $N > 0$ such that
$$
\sup_{-1 \leq x \leq 1} \Big | \sum_{n \leq N} a_n x^n - f(x) \Big | < \varepsilon.
$$
In the 1970's Voronin \cite{Voronin} discovered the remarkable fact that the Riemann zeta-function satisfies a similar universal property. He showed that for any $r < \tfrac 14$, any non-vanishing continuous function $f$ in $|z| \leq r$, which is analytic in the interior, and for arbitrary $\varepsilon>0$, there exists a $T > 0$ 
such that
\begin{equation} \label{universal}
\max_{|z| \leq r} \Big | \zeta(\tfrac 34 + i T + z) - f(z) \Big | < \varepsilon.
\end{equation}
Voronin obtained a more quantitative description of this phenomena, stated below.\begin{theoremvor}
Let $0 < r < \tfrac 14$ be a real number. Let $f$ be a non-vanishing continuous function in $|z|\leq r$, that is analytic in the interior. Then, for any $\varepsilon > 0$, 
\begin{equation}\label{LIMINF}
\liminf_{T \rightarrow \infty}
\frac{1}{T} \cdot  \textup{meas} \Big \{ T \leq t \leq 2T: \max_{|z| \leq r}
\Big | \zeta(\tfrac 34 + it + z) - f(z) \Big | < \varepsilon \Big \}
> 0,
\end{equation}
where $\textup{meas}$ is Lebesgue's measure on $\mathbb{R}$.
\end{theoremvor}
There are several extensions of this theorem, for example to domains more general than compact discs (such as any compact set $K$ contained in the strip $1/2 < \re(s) < 1$ and with connected complement), or to more general $L$-functions. For a complete history of this subject, we refer the reader to \cite{Matsumoto}.

The assumption that $f(z) \neq 0$ is necessary: 
if $f$ were allowed to vanish then an application of Rouche's theorem would produce at least $\asymp T$ zeros $\rho = \beta + i \gamma$ of $\zeta(s)$ with $\beta > \tfrac 12 + \varepsilon$ and $T \leq \gamma \leq 2T$, contradicting the simplest zero-density theorems. 

Subsequent work of Bagchi \cite{Bagchi} clarified Voronin's universality theorem by setting it in the context of probability theory (see \cite{Kowalski} for a streamlined proof). Viewing $\zeta(\tfrac 34 + it + z)$ with $t \in [T, 2T]$ as a random variable $X_T$ in the space of random analytic functions (i.e $X_T(z) = \zeta(\tfrac 34 + i U_T + z)$ with $U_T$ uniformly distributed in $[T, 2T]$), Bagchi showed that as $T \rightarrow \infty$ this sequence of random variables converges in law (in the space of random analytic functions) to a random Euler product, 
$$
\zeta(s, X) := \prod_{p} \Big (1 - \frac{X(p)}{p^s} \Big )^{-1}
$$
with $\{X(p)\}_p$ a sequence of independent random variables uniformly distributed on the unit circle (and with $p$ running over prime numbers). This product converges almost surely for $\re(s) > \tfrac 12$ and defines almost surely a holomorphic function in the half-plane $\re(s) > \sigma_0$ for any $\sigma_0 > \tfrac 12$ (see Section 2 below). The proof of Voronin's universality is then reduced to showing that the support of $\zeta(s+3/4, X)$ in the space of random analytic functions contains all non-vanishing analytic $f : \{ z : |z| < r\} \rightarrow \mathbb{C} \backslash \{0\}$. Moreover it follows from Bagchi's work that the limit in Voronin's universality theorem exists for all but at most countably many $\varepsilon > 0$. 

In this paper, we present an alternative approach to Bagchi's result using methods from hard analysis. As a result we obtain, for the first time, a rate of convergence in Voronin's universality theorem. We also give an explicit description for the limit in terms of the random model $\zeta(s, X)$. 

\begin{theorem} \label{thm:main}
Let $0 < r < \tfrac 14$. Let $f$ be a non-vanishing continuous function on $|z|\leq (r+1/4)/2$ that is  holomorphic in $|z| < (r+1/4)/2$. Let $\omega$ be a real-valued continuously differentiable function with compact support. Then, we have
\begin{align*}
\frac{1}{T}\int_{T}^{2T}\omega\left(\max_{|z| \leq r} |\zeta(\tfrac 34 + it + z) - f(z)|  \right)dt 
& = \ex\left(\omega\left(\max_{|z|\leq r}|\zeta(\tfrac 34 + z, X)-
f(z)|\right)\right)\\
&+ O\left((\log T)^{-\frac{(3/4-r)}{11}+o(1)} \right),
\end{align*}
where the constant in the $O$ depends on $f, \omega$ and $r$.
\end{theorem}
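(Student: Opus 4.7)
The plan is to truncate the Euler product of $\zeta$ at a parameter $Y=(\log T)^\alpha$, reduce the claim to a quantitative equidistribution statement on the torus $\mathbb{T}^{\pi(Y)}$, and optimise $\alpha$ against the truncation error. Set
\[
\zeta_Y(s):=\prod_{p\le Y}\bigl(1-p^{-s}\bigr)^{-1},\qquad \zeta_Y(s,X):=\prod_{p\le Y}\bigl(1-X(p)p^{-s}\bigr)^{-1}.
\]
Since $\omega$ is $C^{1}$ and compactly supported, hence Lipschitz, whenever $\max_{|z|\le r}|\zeta(\tfrac{3}{4}+it+z)-\zeta_Y(\tfrac{3}{4}+it+z)|\le\delta$, respectively the analogous bound for the random model, one may replace $\zeta$ by $\zeta_Y$ inside $\omega$ at cost $O_\omega(\delta)$. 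Thus the first task is to show that the measure (respectively probability) of the exceptional event is small.

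To convert $\max_{|z|\le r}$ into a vertical-line quantity, I would apply Cauchy's integral formula on a circle of radius $\rho$ with $r<\rho<1/4$ centred at $\tfrac{3}{4}+it$, reducing matters to mean-value estimates of the shape
\[
\frac{1}{T}\int_T^{2T}\bigl|\zeta(\sigma+it)-\zeta_Y(\sigma+it)\bigr|^{2k}\,dt\ll_{k} Y^{-c_{k}(\sigma-1/2)+o(1)}
\]
for $\sigma\in[3/4-\rho,3/4+\rho]\subset(1/2,1)$. These follow from the standard Dirichlet polynomial method: write $\zeta-\zeta_Y=\sum_{P(n)>Y}n^{-s}$, split dyadically in $n$, and apply the mean-value theorem for Dirichlet polynomials. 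The random-model counterpart $\ex|\zeta(s,X)-\zeta_Y(s,X)|^{2}$ is much simpler: independence of the $X(p)$ reduces it to a direct computation. Markov's inequality then bounds the exceptional set at level $\delta$ by $\delta^{-2k}Y^{-c(1/4-r)+o(1)}$.

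After truncation the quantity inside $\omega$ depends only on $(p^{-it})_{p\le Y}$, and matters reduce to comparing
\[
\frac{1}{T}\int_T^{2T}G_Y\bigl(-t\log 2,\ldots,-t\log p_{\pi(Y)}\bigr)\,dt\quad\text{with}\quad\int_{\mathbb{T}^{\pi(Y)}}G_Y\,d\theta,
\]
where $G_Y(\theta):=\omega\bigl(\max_{|z|\le r}\bigl|\prod_{p\le Y}(1-e^{i\theta_p}p^{-3/4-z})^{-1}-f(z)\bigr|\bigr)$ is a bounded Lipschitz function on $\mathbb{T}^{\pi(Y)}$. This is a quantitative equidistribution problem for the curve $t\mapsto(t\log p)_{p\le Y}$, handled by Erd\H{o}s--Tur\'an or Koksma--Hlawka: expand $G_Y$ in Fourier series, truncate at $|\mathbf{n}|\le M$, and bound the off-diagonal oscillatory integrals
\[
\frac{1}{T}\int_T^{2T}\exp\!\Bigl(-it\sum_{p\le Y}n_p\log p\Bigr)dt\ll\frac{1}{T\bigl|\sum_p n_p\log p\bigr|}\ll\frac{\prod_{p\le Y}p^{|n_p|}}{T}
\]
by integration by parts together with the elementary bound $|\log(a/b)|\ge 1/\min(a,b)$ for distinct positive integers composed of primes $\le Y$. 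Choosing $M$ maximal with $Y^{M}\le T$ yields a discrepancy bound of order $(\log Y)/\log T$, multiplied by the Lipschitz/Hardy--Krause variation of $G_Y$.

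\textbf{Main obstacle.} The heart of the argument is to balance the truncation error, which decays polynomially in $Y^{-1}$ with exponent depending on $3/4-r$ (obtained via Hadamard three-circle interpolation between the innermost real part $3/4-r$ and the auxiliary radius $\rho$), against the equidistribution error, which grows polynomially in $Y$ through the Lipschitz norm of $G_Y$ on a torus of growing dimension $\pi(Y)$. The choice $Y=(\log T)^{\alpha}$ for an appropriate $\alpha=\alpha(r)$ makes both contributions of size $(\log T)^{-\beta(r)}$. Carefully tracking the constants through Cauchy's formula (whose loss depends on $k$), the $k$-dependence of the moment bounds, and the discrepancy/Lipschitz analysis leads, after optimisation, to the exponent $(3/4-r)/11+o(1)$ asserted in the theorem. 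The most delicate point is keeping the Lipschitz seminorm of $G_Y$ polynomial in $Y$ despite the growing dimension $\pi(Y)$; this is best handled by working with $\log\zeta_Y$, where each coordinate $\theta_p$ enters additively.
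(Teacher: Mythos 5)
Your route (truncate the Euler product at $Y=(\log T)^{\alpha}$, then treat $t\mapsto(t\log p)_{p\le Y}$ as an equidistribution problem on $\mathbb{T}^{\pi(Y)}$) is genuinely different from the paper's, which never passes to the full torus: the paper discretizes the boundary circle $|z|=r$ into $J\asymp(\log T)^{o(1)}$ points, controls $\max|\zeta'|$ to justify the discretization, and compares the $2J$-dimensional joint distribution of $\log\zeta(\tfrac34+it+z_j)$ with the random model via joint moments, characteristic functions, and Beurling--Selberg majorants. As written, your sketch has two gaps that I do not see how to close. First, the identity $\zeta-\zeta_Y=\sum_{P(n)>Y}n^{-s}$ is only valid for $\re(s)>1$; in the strip $\tfrac12<\sigma<1$ the Dirichlet series of $\zeta$ diverges, so the claimed $2k$-th moment bound for $\zeta-\zeta_Y$ cannot be obtained by "the standard Dirichlet polynomial method." One must first replace $\log\zeta(\sigma+it)$ by a Dirichlet polynomial for all $t$ outside an exceptional set, and the only unconditional route to that is via zero-density estimates (this is exactly the paper's Lemma~\ref{UnifShortDirichlet}, via Granville--Soundararajan); the resulting error term carries a factor $\log T$, which forces the first truncation length to be at least $(\log T)^{c/(1/4-r)}$ with $c$ an absolute constant, and any shorter $Y$ must then be reached by a second, mean-square comparison of two Dirichlet polynomials. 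None of this is in your sketch.

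Second, and more seriously, the equidistribution step fails quantitatively. Erd\H{o}s--Tur\'an--Koksma and Koksma--Hlawka carry constants that are exponential in the dimension (e.g.\ a factor $(3/2)^{d}$, or a Hardy--Krause variation that is generically exponential in $d$), and here $d=\pi(Y)\gg(\log T)^{\alpha}/\log\log T$, so these factors are superpolynomial in $\log T$ and swamp any saving. Moreover your frequency cutoff is wrong: the oscillatory integral is only small when $\prod_{p\le Y}p^{|n_p|}\le T^{1-\epsilon}$, i.e.\ when the \emph{total} weight $\sum_p|n_p|\log p$ is at most $(1-\epsilon)\log T$, not when $\max_p|n_p|\le\log T/\log Y$; the number of admissible frequencies in your box is $\exp(c\,Y\log T/\log Y)\ggg T$, so the off-diagonal sum is not small. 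Finally, for a function that is merely Lipschitz on a torus of growing dimension, the Fourier tail beyond any finite box is not absolutely summable, so "truncate the Fourier series at $|\mathbf n|\le M$" has no error bound; repairing this by mollifying $G_Y$ at scale $\epsilon$ forces $\epsilon\gg Y/\log T$ by the frequency constraint above, and one must then verify that the induced error $\epsilon\cdot\mathrm{Lip}(G_Y)$ beats the truncation error --- a computation you do not carry out and which is in tension with the zero-density constraint on $Y$ from the first gap. The claimed exponent $(3/4-r)/11$ is asserted rather than derived. The paper's architecture exists precisely to avoid this high-dimensional discrepancy problem: by reducing to $J$ shifts with $J$ only a small power of $\log T$, the Beurling--Selberg losses are polynomial in $J$ (Theorem~\ref{discrep} loses only $J^2(\log T)^{-\sigma}$), which is what makes the final bound effective.
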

If the random variable $Y_{r, f}=\max_{|z|\leq r}|\zeta(\tfrac 34 + z, X)-f(z)|$ is absolutely continuous, then it follows from the proof of Theorem \ref{thm:main} that for any fixed $\varepsilon>0$ we have
\begin{align*}
&\frac{1}{T} \cdot  \textup{meas} \Big \{ T \leq t \leq 2T: \max_{|z| \leq r}
\Big | \zeta(\tfrac 34 + it + z) - f(z) \Big | < \varepsilon \Big \}\\
 &= \mathbb{P}\left(\max_{|z|\leq r}\big|\zeta(\tfrac 34 + z, X)-
f(z)\big|<\varepsilon\right)
+ O\left((\log T)^{-\frac{(3/4-r)}{11}+o(1)} \right).
\end{align*}
Unfortunately, we have not been able to even show that $Y_{r, f}$ has no jump discontinuities. We conjecture the latter to be true, and one might even hope that $Y_{r, f}$ is absolutely continuous. 

A slight modification of the proof of Theorem \ref{thm:main}   allows for more general domains than the disc $|z| \leq  r$. 
Furthermore, if $\omega \geq \mathbf{1}_{(0, \varepsilon)}$ (where $\mathbf{1}_{S}$ is the indicator function of the set $S$), then it follows from Voronin's universality theorem that the main term in Theorem \ref{thm:main} is positive. Explicit lower bounds for the limit in \eqref{LIMINF} (in terms of $\varepsilon$) are contained in the papers of Good \cite{Good} and Garunkstis \cite{Garunkstis}.  

Our approach is flexible, and can be generalized to other $L$-functions in the $t$-aspect, as well as to ``natural'' families of $L$-functions in the conductor aspect. The only analytic ingredients that are needed are zero density estimates, and bounds on the coefficients of these $L$-functions (the so-called Ramanujan conjecture).  In particular, the techniques of this paper can be used to obtain an effective version of a recent result of Kowalski \cite{Kowalski}, who proved an analogue of Voronin's universality theorem for families of $L$-functions attached to $GL_2$ automorphic forms. In fact, using the zero-density estimates near $1$ that are known for a very large class of $L$-functions (including those in the Selberg class by Kaczorowski and Perelli \cite{KP}, and for families of $L$-functions attached to $GL_n$ automorphic forms by Kowalski and Michel \cite{KM}), one can prove an analogue of Theorem \ref{thm:main} for these $L$-functions, where we replace $3/4$ by some $\sigma<1$ (and $r<1-\sigma$). 


The main idea in the proof of Theorem \ref{thm:main} is to cover the boundary of the disc  $|z|\leq r$ with a union of a growing (with $T$) number of discs, while maintaining a global control of the size of $|\zeta'(s + z)|$ on $|z| \leq r$. It is enough to focus on the boundary of the disc thanks to the maximum modulus principle.
The behavior of $\zeta(s + z)$ with $z$ localized to a shrinking disc is essentially governed by the behavior at a single point $z = z_i$ in the disc. 
This allows us to reduce the problem to understanding the joint distribution of a growing number of shifts $\log\zeta(s + z_i)$ with the $z_i$ well-spaced, which can be understood by computing the moments of these shifts and using standard Fourier techniques. 

It seems very difficult to obtain a rate of convergence which is better than logarithmic in Theorem \ref{thm:main}. We have at present no understanding as to what the correct rate of convergence should be. 

\section{Key Ingredients and detailed results} \label{sec:propositions}

We first begin with stating certain important properties of the random model $\zeta(s, X)$. Let $\{X(p)\}_p$ be a sequence of independent random variables uniformly distributed on the unit circle. Then we have
$$-\log\left(1-\frac{X(p)}{p^s}\right)=\frac{X(p)}{p^s}+ h_X(p,s),$$
where the random series
\begin{equation}\label{ErrorRandom}
\sum_{p} h_X(p,s),
\end{equation}
converges absolutely and surely for $\re(s)>1/2$. Hence, it (almost surely) defines a holomorphic function in $s$ in this half-plane. Moreover, since $\ex(X(p))=0$ and $\ex(|X(p)|^2)=1$, then it follows from Kolmogorov's three-series theorem that the series
\begin{equation}\label{MainRandom}
\sum_{p}\frac{X(p)}{p^s}
\end{equation}
is almost surely convergent for $\re(s)>1/2$. By well-known results on Dirichlet series, this shows that this series defines (almost surely) a holomorphic function on the half-plane $\re(s)>\sigma_0$, for any $\sigma_0>1/2$. Thus, by taking the exponential of the sum of the random series in \eqref{ErrorRandom} and \eqref{MainRandom}, it follows that $\zeta(s, X)$ converges almost surely to a holomorphic function on the half-plane $\re(s)>\sigma_0$, for any $\sigma_0>1/2$. 

We extend the $X(p)$ multiplicatively to all positive integers by setting $X(1)=1$ and 
$X(n):= X(p_1)^{a_1}\cdots X(p_k)^{a_k}, \text{ if }  n= p_1^{a_1}\dots p_k^{a_k}.$
Then we have 
\begin{equation}\label{orthogonality}
\ex\left(X(n)\overline{X(m)}\right)=\begin{cases} 1 & \text{if } m=n,\\
0 & \text{otherwise}.\\
\end{cases}
\end{equation}
Furthermore, for any complex number $s$ with $\re(s)>1/2$ we have almost surely that
$$ \zeta(s, X)= \sum_{n=1}^{\infty} \frac{X(n)}{n^s}.$$

To compare the distribution of $\zeta(s+it)$ to that of $\zeta(s, X)$, we define a probability measure on $[T, 2T]$ in a standard way, by  
$$\mathbb{P}_T(S):= \frac{1}{T}\textup{meas}(S), \text{ for any } S\subseteq [T, 2T].$$

The idea behind our proof of effective universality is to
first reduce the problem to the discrete problem of controlling the
distribution of many shifts $\log \zeta(s_j + it)$ with 
all of the $s_j$ contained in a compact set inside the strip $\tfrac 12 < \re(s) < 1$. 
One of the main ingredients in this reduction is
the following result which allows us
to control the maximum of the derivative of the Riemann zeta-function.
This is proven in Section \ref{sec:derivative}.

\begin{proposition}\label{ControlDerivative}
Let $0<r<1/4$ be fixed. Then there exist positive constants $b_1$, $b_2$ and $b_3$  (that depend only on $r$) such that 
$$
\mathbb{P}_T \left( \max_{|z|\leq r} |\zeta'(\tfrac 34 + it + z)| > e^{V}
\right) \ll \exp\bigg( -b_1 V^{\frac{1}{1-\sigma(r)}}(\log V)^{\frac{\sigma(r)}{1-\sigma(r)}}\bigg)
$$
where $\sigma(r)=\tfrac34-r$,
uniformly for $V$ in the range $b_2<V \leq b_3 (\log T)^{1-\sigma}/(\log \log T)$. 
\end{proposition}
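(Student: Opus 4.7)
The plan is to reduce the supremum of $|\zeta'|$ to a supremum of $|\zeta|$ on a slightly larger disc, and then control the latter through moment estimates. By Cauchy's integral formula, for any $\delta>0$ with $r+\delta<\tfrac 14$,
\[
\max_{|z|\leq r}\bigl|\zeta'(\tfrac 34+it+z)\bigr|\;\leq\; \delta^{-1}\,\max_{|z|\leq r+\delta}\bigl|\zeta(\tfrac 34+it+z)\bigr|.
\]
Choosing $\delta=c/\log V$, the loss factor $\delta^{-1}$ inflates the threshold by $O(\log V)$, i.e.\ shifts $V$ by $O(\log\log V)$, which is easily absorbable in the exponent of the stated bound.

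To avoid paying a polynomial-in-$T$ loss from a naive discretization of the disc, I would use the subharmonicity of $|\zeta|^{2k}$ on a further-enlarged disc $|z|\leq r+\delta+\delta'$ (still inside $\tfrac14$):
\[
\max_{|z|\leq r+\delta}\bigl|\zeta(\tfrac 34+it+z)\bigr|^{2k}\;\ll\; \frac{1}{(\delta')^{2}}\int_{|w|\leq r+\delta+\delta'}\bigl|\zeta(\tfrac 34+it+w)\bigr|^{2k}\,dA(w).
\]
Averaging over $t\in[T,2T]$ and swapping the $t$-average with the $w$-integral by Fubini reduces everything to the pointwise moment estimate
\[
\frac{1}{T}\int_T^{2T}|\zeta(\sigma+it)|^{2k}\,dt\;\ll\; \exp\!\Bigl(C_r\,k^{1/(1-\sigma)}(\log k)^{\sigma/(1-\sigma)}\Bigr),
\]
required uniformly in $\sigma\in[\sigma(r)-\delta-\delta',\,3/4+r+\delta+\delta']$ and for $1\leq k\ll (\log T)^{1-\sigma}/\log\log T$. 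Such a bound should follow from approximating $\log\zeta(\sigma+it)$ by a short Dirichlet polynomial $\sum_{p\leq X}p^{-\sigma-it}$ (with zero-density estimates used to discard the small set of $t$ where this approximation fails), and then computing the $2k$-th moment of the polynomial via the orthogonality of $t\mapsto n^{-it}$. Applying Markov's inequality and optimizing $k$ in terms of $V$ produces the stated tail shape.

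The main obstacle I anticipate is matching the exponent $1/(1-\sigma(r))$ in the proposition rather than the a priori weaker $1/(1-\sigma(r)-\delta-\delta')$. This is where the choices $\delta,\delta'=O(1/\log V)$ are crucial: the resulting discrepancy in the exponent of $V$ introduces only a factor of $(V\log V)^{O(1/\log V)}=O(1)$, which can be absorbed into the constant $b_1$. Verifying the uniform moment bound over the stated joint range of $\sigma$ and $k$, and carefully tracking the error terms coming from the Dirichlet polynomial approximation against the admissible range of $V$, form the technical heart of the argument.
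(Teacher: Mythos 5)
Your skeleton matches the paper's: Cauchy's formula to pass from $\zeta'$ to $\zeta$ on a slightly larger disc, a mean-value device (your subharmonicity, the paper's Cauchy kernel $\int |\cdots|^{2k}\,dz/(z-s)$) to convert the maximum over the disc into an average and avoid discretization, a Dirichlet-polynomial approximation valid outside a zero-density exceptional set, and then moments plus Markov. The gap is in the key quantitative input. The moment bound you assert,
$$\frac{1}{T}\int_T^{2T}|\zeta(\sigma+it)|^{2k}\,dt\ \ll\ \exp\Bigl(C_r\,k^{1/(1-\sigma)}(\log k)^{\sigma/(1-\sigma)}\Bigr),$$
has the wrong shape: you have written the large-deviation rate function of $\log|\zeta(\sigma+it)|$ where its Legendre transform should stand (the true order of $\log\int|\zeta|^{2k}$ is about $k^{1/\sigma}/\log k$). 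More importantly, even taking your bound at face value, Markov's inequality gives
$$\mathbb{P}_T\bigl(|\zeta|>\delta^{-1}e^{V}\bigr)\ \le\ e^{-2kV+O(k\log\log V)}\exp\Bigl(C_rk^{1/(1-\sigma)}(\log k)^{\sigma/(1-\sigma)}\Bigr),$$
and optimizing forces $k\asymp V^{(1-\sigma)/\sigma}/\log V$, yielding only $\exp(-cV^{1/\sigma}/\log V)$. Since $\sigma=\sigma(r)>1/2$, one has $1/\sigma<1/(1-\sigma)$, so this is strictly weaker than the stated $\exp(-b_1V^{1/(1-\sigma)}(\log V)^{\sigma/(1-\sigma)})$; e.g.\ for small $r$ you would get $\exp(-cV^{4/3}/\log V)$ instead of $\exp(-cV^{4}(\log V)^{3})$.

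The fix, and what the paper does, is to take polynomial moments of $\log\zeta$ rather than of $\zeta$: outside the exceptional set one replaces $\log\zeta(\tfrac34+it+s)$ by $P(s,t)=\sum_{n\le y}\Lambda(n)n^{-3/4-it-s}/\log n$, for which $\frac1T\int_T^{2T}|P|^{2k}\,dt\ll\bigl(Ck^{1-\sigma}/(\log k)^{\sigma}\bigr)^{2k}$ by orthogonality, and then applies Chebyshev to the event $\log|\zeta|>V+\log\delta$ with $k\asymp V^{1/(1-\sigma)}(\log V)^{\sigma/(1-\sigma)}$; this produces the claimed exponent directly. A secondary caution about your Fubini step: if you average $|\zeta|^{2k}$ itself over $t$, you cannot first excise the exceptional set inside the area integral, since on that set $|\zeta|^{2k}$ can be as large as $T^{\varepsilon k}$ with $k$ up to a power of $\log T$, overwhelming the measure saving $T^{-c}$; working with the uniformly controlled polynomial $P$ and adding the exceptional probability separately avoids this.
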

We also prove an analogous result for the random model $\zeta(s, X)$, which holds for all sufficiently large $V$. 
\begin{proposition}\label{DerRandom}
Let $0<r<1/4$ be fixed and $\sigma(r)=\tfrac34-r$.  Then there exist positive constants $b_1$ and $b_2$ (that depend only on $r$) such that for all $V>b_2$ we have
$$\mathbb{P}\left(\max_{|z| \leq r} |\zeta'(\tfrac 34 + z, X)| > e^V\right) \ll  \exp\bigg( -b_1 V^{\frac{1}{1-\sigma(r)}}(\log V)^{\frac{\sigma(r)}{1-\sigma(r)}}\bigg).$$
\end{proposition}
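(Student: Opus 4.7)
The plan is to use Cauchy's integral formula to reduce the tail for the derivative to an analogous tail for $\zeta(\tfrac34+w,X)$ on a slightly larger disc, and then to bound the disc maximum of $\zeta(\tfrac34+w,X)$ by combining a single-point tail (obtained by computing exponential moments using the independence of the $X(p)$) with a net argument on the boundary circle.

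Set $\delta=1/\log V$ and $r'=r+\delta$, so that $r'\in(r,1/4)$ for $V$ sufficiently large. Cauchy's formula on the circle $|w-z|=\delta$ gives
\[
\max_{|z|\le r}|\zeta'(\tfrac34+z,X)|\le \delta^{-1}\max_{|w|\le r'}|\zeta(\tfrac34+w,X)|,
\]
so it suffices to bound the tail of $\max_{|w|\le r'}|\zeta(\tfrac34+w,X)|$ with $V$ replaced by $V-\log\log V$. Because $\delta\to 0$, both $1/(1-\sigma(r'))-1/(1-\sigma(r))$ and $\sigma(r')/(1-\sigma(r'))-\sigma(r)/(1-\sigma(r))$ are $O(1/\log V)$, so $V^{1/(1-\sigma(r'))}(\log V)^{\sigma(r')/(1-\sigma(r'))}$ differs from $V^{1/(1-\sigma(r))}(\log V)^{\sigma(r)/(1-\sigma(r))}$ by only a bounded multiplicative factor, which can be absorbed into $b_1$.

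For the single-point tail, I would write $\log\zeta(\tfrac34+w,X)=P(w,X)+H(w,X)$ with $P(w,X):=\sum_p X(p)/p^{3/4+w}$ and $H(w,X):=\sum_p h_X(p,3/4+w)$. Since $h_X(p,s)\ll p^{-2\re(s)}$, the remainder $H$ is almost surely uniformly bounded on $|w|\le r'$, so it is enough to bound the tail of $|P(w,X)|$. For fixed $w$ with $\sigma:=\re(\tfrac34+w)$, independence of the $X(p)$ yields
\[
\ex\exp(\lambda\,\re\,P(w,X))=\prod_p I_0(\lambda/p^\sigma),
\]
where $I_0$ is the modified Bessel function of the first kind of order zero. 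Splitting this product at $p\asymp\lambda^{1/\sigma}$ and using $\log I_0(x)=x^2/4+O(x^4)$ for $x\le 1$, $\log I_0(x)=x-\tfrac12\log(2\pi x)+O(1/x)$ for $x>1$, together with the prime number theorem, I obtain $\log\ex\exp(\lambda\,\re\,P(w,X))\ll \lambda^{1/\sigma}/\log\lambda$. Chernoff's inequality with the optimal choice $\lambda\asymp(V\log V)^{\sigma/(1-\sigma)}$ then produces $\mathbb{P}(\re P(w,X)>V)\ll\exp(-cV^{1/(1-\sigma)}(\log V)^{\sigma/(1-\sigma)})$, and a union bound over $\pm\re P,\pm\im P$ gives the same tail for $|P(w,X)|$.

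To pass from single points to the disc maximum, by the maximum principle $\max_{|w|\le r'}|P(w,X)|=\max_{|w|=r'}|P(w,X)|$; cover this circle by $N=\lceil V^A\rceil$ equispaced points and union-bound the single-point tails. The oscillation of $P$ on each arc is $\ll N^{-1}\max_{|u|\le r'}|P'(u,X)|$, and Cauchy's formula on the fixed disc $|u|\le(r'+1/4)/2$ bounds $|P'(u,X)|$ by the supremum of $|P|$ on $|v|\le(r'+1/4)/2$; a crude polynomial-in-$V$ bound for the latter holds outside an event of probability $\ll\exp(-V^{\varepsilon})$ by splitting $P$ at $p=V$, using \eqref{orthogonality} to compute high moments of the tail and a fine auxiliary net. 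Taking $A$ sufficiently large absorbs the net discrepancy, and the union bound over the $N$ main net points completes the argument. The main obstacle is the careful bookkeeping of scales: the radius $r'$ must shrink to $r$ as $V\to\infty$ to preserve the exponent $1/(1-\sigma(r))$, yet the auxiliary Cauchy step in the net argument is most conveniently performed on a fixed disc; ensuring that all error terms from these two Cauchy reductions are dominated by the target tail is the delicate technical step.
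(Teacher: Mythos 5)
Your overall strategy matches the paper's at the top level: a Cauchy estimate on a slightly enlarged disc converts the tail of $\max_{|z|\le r}|\zeta'(\tfrac34+z,X)|$ into a tail for the maximum of $|\log\zeta(\tfrac34+w,X)|$, and the heart of the matter is a large-deviation bound for $P(w,X)=\sum_p X(p)p^{-3/4-w}$, which you obtain with the correct exponent $V^{1/(1-\sigma)}(\log V)^{\sigma/(1-\sigma)}$ via exponential moments ($\prod_p I_0(\lambda/p^{\sigma})$, split at $p\asymp\lambda^{1/\sigma}$) and Chernoff. The paper instead takes $2k$-th moments (Lemma \ref{lem:momentbd}) and applies Chebyshev with $k\asymp V^{1/(1-\sigma)}(\log V)^{\sigma/(1-\sigma)}$; the two single-point arguments are equivalent. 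Where you genuinely diverge is the passage from single points to the disc maximum: you use an $N$-point net on $|w|=r'$, control of $P'$, and an a priori crude bound, whereas the paper simply notes that $\bigl(\max_{|s|\le r+\delta}|\log\zeta(\tfrac34+s,X)|\bigr)^{2k}\ll\delta^{-1}\int_{|z|=r+2\delta}|\log\zeta(\tfrac34+z,X)|^{2k}\,|dz|$ by Cauchy's formula applied to the $2k$-th power, then takes expectations — the maximum is absorbed at the level of moments and no net is needed. Your radius bookkeeping ($\delta=1/\log V$, exponents perturbed by $O(1/\log V)$) is correct; the paper's choice $\delta=e^{-V/2}$ makes this even more painless at the cost of a harmless factor $e^{V/2}$.

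One quantitative claim in your net argument is too weak as stated: you let the crude polynomial bound on $\max_{|v|\le(r'+1/4)/2}|P(v,X)|$ fail on an event of probability $\ll\exp(-V^{\varepsilon})$. Since $1/(1-\sigma(r))=1/(1/4+r)\in(2,4)$, the target error is $\exp\bigl(-b_1V^{1/(1/4+r)}(\log V)^{\sigma(r)/(1-\sigma(r))}\bigr)$, and $\exp(-V^{\varepsilon})$ with small $\varepsilon$ is not absorbed by it; "taking $A$ large" controls the net spacing but does not by itself shrink this exceptional probability below the target. The step is repairable inside your framework: by \eqref{orthogonality} the $2k$-th moment of the tail $\sum_{p>V}X(p)p^{-3/4-v}$ is at most $k!\bigl(\sum_{p>V}p^{-2(3/4-\rho)}\bigr)^{k}$, and for the random model there is no upper restriction on $k$, so taking $k\asymp V^{2A}$ at level $V^{A}$ yields exceptional probability $\exp(-cV^{2A})$, which suffices once $2A>1/(1/4+r)$. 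You must make that exponent explicit; alternatively, adopting the paper's Cauchy-integral-of-the-$2k$-th-power device removes the net, the derivative control, and this issue entirely.
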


Once the reduction has been accomplished, it remains to understand the joint
distribution of the shifts $\{\log \zeta(s_1 + it), \log \zeta(s_2 + it), \dots, \log \zeta(s_J + it)\}$
with $J \rightarrow \infty$ as $T \rightarrow \infty$ at a certain rate,
and $s_1, \ldots, s_J$ are complex numbers with $\tfrac 12 < \re (s_j) < 1$ for all $j \leq J$. Heuristically, this should be well approximated by the the joint distribution of the random variables
$\{\log \zeta(s_1, X),\log \zeta(s_2, X), \dots, \log \zeta(s_J, X)\}$. In order to establish this fact (in a certain range of $J$), we first prove, in Section \ref{sec:moments}, that the
moments of the joint shifts $\log\zeta(s_j+it)$ are very close to the corresponding ones of $\log\zeta(s_j, X)$, for $j\leq J$.
\begin{thm} \label{MomentsShifts} 
Fix $1/2<\sigma_0<1$. Let $s_1, s_2, \dots, s_k, r_1, \dots, r_{\ell}$ be complex numbers in the rectangle $\sigma_0<\re(z)<1$ and $|\im(z)|\leq T^{(\sigma_0-1/2)/4}$.  Then, there exist positive constants $c_3, c_4, c_5 $ and a set $\mathcal{E}(T)\subset [T,2T]$ of measure $\ll T^{1-c_3}$, such that if $k, \ell \leq c_4\log T/\log\log T$ then 
\begin{align*}
&\frac{1}{T} \int_{[T,2T]\setminus \mathcal{E}(T)}\left(\prod_{j=1}^k\log\zeta(s_j+it)\right)\left(\prod_{j=1}^{\ell}\log\zeta(r_j-it)\right)dt\\
&=  \ex\left(\left(\prod_{j=1}^k\log\zeta(s_j,X)\right)\left(\prod_{j=1}^{\ell}\log\overline{\zeta(r_j, X)}\right)\right)+ O\left(T^{-c_5}\right).
\end{align*}
\end{thm}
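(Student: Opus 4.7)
The plan is to approximate each $\log\zeta(s_j+it)$ and $\log\zeta(r_j-it)$ by a short Dirichlet polynomial supported on prime powers $p^k$ with $p\leq Y:=T^\eta$ (for a small $\eta>0$), and analogously to truncate the random series $\log\zeta(s_j,X)$. Expanding the product and integrating over $t$ then reduces the problem to matching coefficients on a diagonal, where the zeta-side diagonal (extracted by integrating oscillatory factors $p^{-it}$) coincides with the random-side diagonal (extracted by \eqref{orthogonality}).

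\textbf{Step 1 (Selberg-type approximation for $\log\zeta$).} Set
$$P_Y(w) := \sum_{p\leq Y}\frac{1}{p^{w}} + \sum_{p\leq Y^{1/2}}\frac{1}{2 p^{2w}}.$$
The explicit formula for $\log\zeta$, combined with standard zero-density estimates of the form $N(\sigma,T)\ll T^{A(1-\sigma)}(\log T)^{B}$, should yield an exceptional set $\mathcal{E}(T)\subset[T,2T]$ of measure $\ll T^{1-c_3}$ such that for every $t\in[T,2T]\setminus\mathcal{E}(T)$,
$$\log\zeta(s_j+it) = P_Y(s_j+it) + O(T^{-\delta}), \qquad \log\zeta(r_j-it) = P_Y(r_j-it) + O(T^{-\delta}),$$
uniformly over $s_j, r_j$ in the rectangle, with $\delta=\delta(\sigma_0,\eta)>0$. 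The exceptional set consists of $t$ for which $\zeta$ has a zero with real part at least $\sigma_0$ and imaginary part within $O(T^{(\sigma_0-1/2)/4})$ of $t$; choosing $\eta$ small relative to $\sigma_0-1/2$ keeps the approximation effective.

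\textbf{Step 2 (Random-side truncation).} Write
$$\log\zeta(s_j,X) = \sum_{p\leq Y}\frac{X(p)}{p^{s_j}} + \sum_{p\leq Y^{1/2}}\frac{X(p)^2}{2 p^{2s_j}} + R_X(s_j).$$
Using \eqref{orthogonality} together with $|X(p)|=1$, the remainder satisfies $\ex(|R_X(s_j)|^{2k})\ll_k Y^{-c}$ for some $c=c(\sigma_0)>0$, so the random tails contribute at most $T^{-c_5}$ in the overall moment.

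\textbf{Step 3 (Moment extraction).} After substituting both approximations, the task reduces to evaluating
$$\frac{1}{T}\int_T^{2T}\Big(\prod_{j=1}^k P_Y(s_j+it)\Big)\Big(\prod_{j=1}^\ell P_Y(r_j-it)\Big)dt.$$
Expanding the product gives a sum of monomials of the form
$$\Big(\prod_{j=1}^k \frac{1}{p_j^{s_j}}\Big)\Big(\prod_{j=1}^\ell \frac{1}{q_j^{r_j}}\Big) \cdot \frac{1}{T}\int_T^{2T}\Big(\frac{q_1\cdots q_\ell}{p_1\cdots p_k}\Big)^{it}dt,$$
with $p_j, q_j\leq Y$ (with the obvious modifications for prime-square terms). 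The off-diagonal contributions are bounded by $\ll (\log T)/T$ per monomial, and after summing over at most $\ll Y^{k+\ell}$ tuples total $O(T^{-c_5})$ provided $k,\ell\leq c_4\log T/\log\log T$ with $\eta c_4$ small enough. The diagonal, where the two products of primes coincide with the correct multiplicities, matches exactly the corresponding random moment by \eqref{orthogonality}.

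\textbf{Main obstacle.} The delicate step is Step~1: securing the Dirichlet-polynomial approximation uniformly in $s_j, r_j$ across the full rectangle, with the strong measure bound $T^{1-c_3}$ on the exceptional set. The uniformity in the imaginary parts up to $T^{(\sigma_0-1/2)/4}$ requires applying the zero-density estimate over heights $[T-T^{(\sigma_0-1/2)/4},2T+T^{(\sigma_0-1/2)/4}]$ with enough slack to absorb a union bound over the relevant shifts, and the polynomial savings $T^{-\delta}$ must survive being raised to a power $k+\ell$ that grows with $T$. Once the approximation is in place, Steps~2 and 3 proceed by routine expansions and orthogonality.
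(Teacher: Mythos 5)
Your Steps 1 and 2 are sound in outline (the paper does essentially the same thing for a single factor via Lemma 1 of Granville--Soundararajan plus zero-density estimates, and the union bound over the shifts $\im(s_j)$ works as you describe). The genuine gap is in Step 3. After truncating \emph{each} factor at length $Y=T^{\eta}$, the product $\prod_{j\le k}P_Y(s_j+it)\prod_{j\le\ell}P_Y(r_j-it)$ is a Dirichlet polynomial of length $Y^{k+\ell}=T^{\eta(k+\ell)}$. With $k,\ell\asymp\log T/\log\log T$ this is $\exp\bigl(c\,(\log T)^2/\log\log T\bigr)$, i.e.\ super-polynomial in $T$, for \emph{every} fixed $\eta>0$; choosing ``$\eta c_4$ small enough'' cannot make it $o(T)$. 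Consequently the off-diagonal terms are not $O(T^{-c_5})$: the terms with $m\neq n$ but $m,n$ large and adjacent contribute roughly $N^{2-2\sigma_0}(\log N)^{O(k)}/T$ with $N=Y^{k+\ell}$, which diverges. You also cannot repair this by shrinking $Y$ to keep $Y^{k+\ell}\ll T$, since that forces $Y\ll(\log T)^{O(1)}$ and then the per-factor approximation error in Step 1 is only a power of $\log T$, not the power of $T$ needed for the final error term.

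The paper avoids this tension by truncating the \emph{product} rather than the factors: it writes $\prod_{\ell}\log\zeta(s_\ell+it+z)=\sum_n F_{\mathbf s}(n)n^{-it-z}$ with
$F_{\mathbf s}(n)=\sum_{n_1\cdots n_k=n}\prod_\ell \Lambda(n_\ell)n_\ell^{-s_\ell}/\log n_\ell$, applies Perron's formula in $z$ with the kernel $x^z/z$, and shifts the contour to $\re(z)=-\delta/4$ (legitimate off the same zero-density exceptional set, where $\log\zeta(s_j+it+z)\ll\log T$). The residue at $z=0$ recovers $\prod_j\log\zeta(s_j+it)$, and one is left with a single Dirichlet polynomial $\sum_{n\le x}F_{\mathbf s}(n)n^{-it}$ of length $x=T^{(\sigma_0-1/2)/8}$ \emph{independent of $k$}. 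The coefficient bound $|F_{\mathbf s}(n)|\le(2\log n)^k n^{-\sigma_0}$ then makes the off-diagonal $\ll x^{3-2\sigma_0}(2\log x)^{k+\ell}/T\ll T^{-1/2}$, and the diagonal matches the random model exactly by the orthogonality relation \eqref{orthogonality}. If you want to keep your factor-by-factor setup, you must insert this Perron truncation of the product as an intermediate step; as written, Step 3 fails.
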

Having obtained the moments we are in position to understand the
characteristic function, 
$$ \Phi_T(\mathbf{u}, \mathbf{v}):= \frac1T \int_T^{2T} \exp\left(i\left(\sum_{j=1}^J (u_j \re\log\zeta(s_j+it)+ v_j \im\log\zeta(s_j+it))\right)\right)dt,$$
where $\mathbf{u}=(u_1,\dots, u_J)\in \mathbb{R}^J$ and $\mathbf{v}=(v_1,\dots, v_J)\in \mathbb{R}^J$. We relate the above characteristic function to
the characteristic function of the probabilistic model, 
$$ \Phi_{\textup{rand}}(\mathbf{u}, \mathbf{v}):= \ex\left( \exp\left(i\left(\sum_{j=1}^J (u_j \re\log\zeta(s_j, X)+ v_j \im\log\zeta(s_j, X))\right)\right)\right).$$
This is obtained in the following theorem, which we prove in Section \ref{sec:characteristic}. 

\begin{thm}\label{characteristic}
Fix $1/2<\sigma<1$. Let $T$ be large and $J\leq (\log T)^{\sigma}$ be a positive integer. Let $s_1, s_2, \dots, s_J$ be complex numbers such that  $\min(\re(s_j))=\sigma$ and $\max(|\im(s_j)|)<T^{(\sigma-1/2)/4}$. Then, there exist positive constants $c_1(\sigma), c_2(\sigma)$, such that for all $ \mathbf{u}, \mathbf{v} \in \mathbb{R}^J$ such that $\max(|u_j|), \max(|v_j|)\leq c_1(\sigma) (\log T)^{\sigma}/J$  we have 
$$ \Phi_T(\mathbf{u}, \mathbf{v})= \Phi_{\textup{rand}}(\mathbf{u}, \mathbf{v})+ O\left(
\exp\left(-c_2(\sigma)\frac{\log T}{\log\log T}\right)\right).$$
\end{thm}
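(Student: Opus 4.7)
The plan is to Taylor-expand both characteristic functions, match low-order joint moments with the help of Theorem~\ref{MomentsShifts}, and control the tails via concentration estimates for $\log\zeta$ and its random analogue. Writing $S(t) := \sum_j(u_j \re\log\zeta(s_j+it) + v_j \im\log\zeta(s_j+it))$ and setting $A_j := (u_j-iv_j)/2$, $B_j := \overline{A_j}$, one first passes to the complex form
\[
iS(t) = i\sum_{j=1}^{J}\bigl(A_j L_j(t) + B_j\,\overline{L_j(t)}\bigr), \qquad L_j(t):=\log\zeta(s_j+it),
\]
where $|A_j|,|B_j|\le c_1(\sigma)(\log T)^\sigma/J$. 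Then expand $\exp(iS(t))$ as a multi-indexed power series in the $L_j(t)$ and $\overline{L_j(t)}$, grouping by total degree, and do the same for $\exp(iS_{\textup{rand}})$ in terms of $\log\zeta(s_j,X)$.

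Truncate the Taylor series at total degree $N:=\lfloor c_4\log T/(2\log\log T)\rfloor$, safely inside the admissible range of Theorem~\ref{MomentsShifts}. For each multi-index with $|\mathbf k|+|\boldsymbol\ell|\le N$, integrate the corresponding monomial on $[T,2T]\setminus\mathcal E(T)$ and apply Theorem~\ref{MomentsShifts} to match the joint moment with its expectation in the random model, incurring an error of size $O(T^{-c_5})$. Summing over multi-indices, the accumulated matching error is bounded by
\[
\Bigl(\prod_{j}e^{|A_j|+|B_j|}\Bigr)\cdot O(T^{-c_5})\ll \exp(2c_1(\log T)^\sigma)\,T^{-c_5}\ll T^{-c_5/2},
\]
which is negligible compared with the target $\exp(-c_2\log T/\log\log T)$. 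The integral over $\mathcal E(T)$ contributes $O(T^{-c_3+o(1)})$ via a crude polynomial bound on the integrand together with $\textup{meas}(\mathcal E(T))\ll T^{1-c_3}$.

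The main obstacle is the tail $\sum_{|\mathbf k|+|\boldsymbol\ell|>N}$ in both expansions. The Taylor remainder is $\min(2,|S(t)|^{N+1}/(N+1)!)$, so what is required are estimates of high moments of $|S(t)|$ and $|S_{\textup{rand}}|$. Since $|S(t)|\le 2c_1(\log T)^\sigma\max_j|L_j(t)|$, the key input is a sub-Gaussian-type moment bound for $\log\zeta(\sigma+it)$ on $\re(s)>1/2$, in the spirit of a large-deviation estimate analogous to Proposition~\ref{ControlDerivative} applied to $\zeta$ in place of $\zeta'$; on the random side, the same type of bound follows from a Hoeffding-type inequality, since $\log\zeta(s,X) = \sum_p -\log(1 - X(p)/p^s)$ is a sum of independent bounded random variables on $\re(s)>1/2$. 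The delicate point is that $|S(t)|$ can reach the scale $(\log T)^\sigma$, which exceeds the truncation parameter, so Stirling's bound alone does not tame the remainder. To overcome this, split $[T,2T]$ (resp.\ the probability space) into a good set where $\max_j|L_j|\le V$ and its complement; choose $V$ and the constant $c_1(\sigma)$ in the hypothesis so that on the good set the Taylor remainder is controlled by Stirling's formula, while the measure of the complement is bounded by the large-deviation estimate. Balancing these two contributions, and combining with the main-term matching error above, produces the claimed bound $O\bigl(\exp(-c_2(\sigma)\log T/\log\log T)\bigr)$.
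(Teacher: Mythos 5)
Your overall architecture --- Taylor expansion of the exponential truncated at degree $N\asymp \log T/\log\log T$, moment matching term by term via Theorem~\ref{MomentsShifts} with accumulated error $e^{O((\log T)^{\sigma})}T^{-c_5}$, trivial treatment of $\mathcal E(T)$ --- is exactly the paper's, and that part of your argument is sound. The gap is in your control of the Taylor remainder on the random side. You claim that the needed tail bound for $\log\zeta(s,X)$ ``follows from a Hoeffding-type inequality'' because it is a sum of independent bounded terms. Hoeffding gives only $\mathbb P(|\log\zeta(s,X)|>V)\ll \exp(-cV^2)$, since $\sum_p p^{-2\sigma}=O(1)$; that is \emph{not} the same type of bound as the one analogous to Proposition~\ref{ControlDerivative}, which has the much stronger shape $\exp(-cV^{1/(1-\sigma)}(\log V)^{\sigma/(1-\sigma)})$. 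With only a sub-Gaussian tail your balancing act provably fails for every $\sigma>1/2$: making the good-set remainder $(e|S|/(N+1))^{N+1}$ small forces $V\ll (\log T)^{1-\sigma}/\log\log T$, while making $\exp(-cV^2)$ smaller than the target $\exp(-c_2\log T/\log\log T)$ forces $V\gg (\log T)^{1/2}(\log\log T)^{-1/2}$, and these ranges are disjoint once $\sigma>1/2$. The correct input is the $2k$-th moment bound of Lemma~\ref{lem:momentbd} together with Chebyshev at $k\asymp V^{1/(1-\sigma)}(\log V)^{\sigma/(1-\sigma)}$ (this is exactly how Proposition~\ref{DerRandom} is proved), which at $V\asymp(\log T)^{1-\sigma}/\log\log T$ does give $\exp(-c\log T/\log\log T)$. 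The same caveat applies to your phrase ``sub-Gaussian-type'' on the $\zeta$ side, though there your explicit appeal to a Proposition~\ref{ControlDerivative}-analogue supplies the right strength.

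It is also worth noting that the paper dispenses with the good-set/bad-set splitting altogether: it bounds the remainder by $\frac{1}{(2N)!}\bigl(\tfrac{2c_1(\log T)^{\sigma}}{J}\bigr)^{2N}\cdot\frac1T\int_{[T,2T]\setminus\mathcal E(T)}\bigl(\sum_j|\log\zeta(s_j+it)|\bigr)^{2N}dt$, estimates the $2N$-th moment of each $|\log\zeta(s_j+it)|$ by transferring to the random model via Theorem~\ref{MomentsShifts} and then invoking Lemma~\ref{lem:momentbd}, and finishes with Minkowski's inequality and Stirling to get $O(e^{-N})$. This is both simpler and exactly the ingredient whose absence creates the gap above; if you route your tail estimates through Lemma~\ref{lem:momentbd} in this way, your proof closes.
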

Using this result, we can show that the joint distribution of the shifts $\log \zeta(s_j+it)$ is very close to the corresponding joint distribution of the random variables $\log \zeta(s_j, X)$. The proof depends on Beurling-Selberg functions.
To measure how close are these distributions, we introduce the discrepancy $\mathcal{D}_T(s_1, \ldots, s_J)$ defined as 
\begin{align*}
\sup_{(\mathcal R_1, \ldots, \mathcal R_J) \subset \mathbb C^J} \bigg| \mathbb P_T\bigg( \log \zeta(s_j+it) \in \mathcal R_j, \forall j \le J  \bigg)-
\mathbb P\bigg( \log \zeta(s_j, X) \in \mathcal R_j, \forall j \le J   \bigg)\bigg| 
\end{align*}
where the supremum is taken over all $(\mathcal R_1, \ldots, \mathcal R_J) \subset \mathbb C^J$ and 
for each $j=1,\ldots, J$ the set $\mathcal R_j$ 
is a rectangle with sides parallel to the coordinate axes.
Our next theorem, proven in Section \ref{sec:distribution}, states a bound
for the above discrepancy. This generalizes Theorem 1.1 of \cite{LLR}, which corresponds to the special case $J=1$. 

\begin{thm} \label{discrep}
Let $T$ be large, $\tfrac12<\sigma<1$ and $J \le (\log T)^{\sigma/2}$ be a positive integer.
Let $s_1,s_2, \ldots, s_J$ be complex numbers such that
$$
\tfrac12 < \sigma:=\min_j(\tmop{Re}(s_j)) \le \max_j(\tmop{Re}(s_j)) <1 
\quad \mbox{and} \quad \max_j(|\tmop{Im}(s_j)|) < T^{ (\sigma-\frac12)/4}.
$$
 Then, we have
\begin{align*}
\mathcal{D}_T(s_1, \ldots, s_J) \ll \frac{J^2 }{(\log T)^{\sigma}}.
\end{align*}
\end{thm}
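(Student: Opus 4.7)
The strategy is a multivariate Erd\H{o}s--Tur\'an--Koksma reduction: I would approximate the indicator of each axis-aligned rectangle $\mathcal R_j \subset \mathbb R^2$ by Beurling--Selberg functions with compactly supported Fourier transform, so that the resulting expectations can be evaluated through the characteristic function $\Phi_T$ and compared to $\Phi_{\mathrm{rand}}$ via Theorem \ref{characteristic}. Setting $\Delta := c_1(\sigma)(\log T)^{\sigma}/J$ with $c_1(\sigma)$ as in Theorem \ref{characteristic}, all Fourier transforms below will be supported in $[-\Delta,\Delta]^{2J}$, precisely the regime where Theorem \ref{characteristic} gives a saving of $\exp(-c_2 \log T/\log \log T)$. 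A routine moment-based truncation (using Theorem \ref{MomentsShifts} with $\ell=0$ and its random analogue) allows us to first restrict to rectangles contained in a disc of radius $(\log T)^A$ at the cost of only $T^{-c}$ in the discrepancy.

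For each such bounded rectangle $\mathcal R_j = I_j \times K_j$, I would form two-dimensional Selberg majorants and minorants $M_j^\pm$ by tensoring the classical one-dimensional construction:
\begin{equation*}
M_j^-(x,y) \le \mathbf 1_{\mathcal R_j}(x,y) \le M_j^+(x,y), \qquad \|M_j^\pm - \mathbf 1_{\mathcal R_j}\|_{L^1(\mathbb R^2)} \ll 1/\Delta,
\end{equation*}
with $\|M_j^\pm\|_\infty \ll 1$ and $\widehat{M_j^\pm}$ supported in $[-\Delta,\Delta]^2$. Using the pointwise sandwich and Fourier inversion
\begin{equation*}
\mathbb E_T \prod_j M_j^\pm\bigl(\log \zeta(s_j+it)\bigr) = \int_{[-\Delta,\Delta]^{2J}} \prod_j \widehat{M_j^\pm}(u_j,v_j)\, \Phi_T(\mathbf u,\mathbf v)\, d\mathbf u\, d\mathbf v
\end{equation*}
(and the analogue with $\Phi_{\mathrm{rand}}$), Theorem \ref{characteristic} lets me replace $\Phi_T$ by $\Phi_{\mathrm{rand}}$ with an error dominated by $\prod_j \|\widehat{M_j^\pm}\|_{L^1} \cdot \exp(-c_2 \log T/\log \log T)$. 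Because of the preliminary truncation, each $\|\widehat{M_j^\pm}\|_{L^1}$ is polylogarithmic in $T$, so (since $J \le (\log T)^{\sigma/2}$) this total error is $\ll T^{-c}$.

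It remains to pass from $\mathbb E \prod_j M_j^\pm(\log \zeta(s_j,X))$ back to $\mathbb P(\bigcap_j \{\log \zeta(s_j,X) \in \mathcal R_j\})$. Expanding multilinearly,
\begin{equation*}
\prod_j M_j^+ - \prod_j \mathbf 1_{\mathcal R_j} = \sum_{\emptyset \ne S \subseteq \{1,\ldots,J\}} \prod_{j \in S} (M_j^+ - \mathbf 1_{\mathcal R_j}) \prod_{j \notin S} \mathbf 1_{\mathcal R_j},
\end{equation*}
the expectation of each summand equals an integral of nonnegative $L^1$-functions against the joint density of $(\log \zeta(s_j,X))_{j \in S}$ on $\mathbb R^{2|S|}$. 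A uniform boundedness of these joint densities -- obtainable from Gaussian-type decay of the joint characteristic functions, itself a consequence of the independence of $\{X(p)\}_p$ together with the moment estimates of Theorem \ref{MomentsShifts} -- bounds each summand by $O(\Delta^{-|S|})$, and summing the resulting geometric series produces $O(J/\Delta)=O(J^2/(\log T)^{\sigma})$. Running the same argument with $M_j^-$ gives the matching lower bound.

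The main obstacle is establishing the uniform joint density bound on $(\log \zeta(s_j,X))_{j \in S}$: without it, the $L^1$-smallness of $M_j^\pm - \mathbf 1_{\mathcal R_j}$ cannot be transferred into a usable expectation estimate. The secondary bookkeeping -- controlling $\prod_j \|\widehat{M_j^\pm}\|_{L^1}$ so that the Theorem \ref{characteristic} error is absorbed -- is delicate but follows from the truncation step and standard bounds for Beurling--Selberg Fourier transforms.
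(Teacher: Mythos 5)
Your overall architecture (truncate, smooth with Beurling--Selberg functions of Fourier support $[-\Delta,\Delta]$ with $\Delta=c_1(\sigma)(\log T)^{\sigma}/J$, pass from $\Phi_T$ to $\Phi_{\mathrm{rand}}$ via Theorem \ref{characteristic}, then unsmooth) matches the paper, and your bookkeeping of the $\exp(-c_2\log T/\log\log T)$ error against the $\exp(O(J\log\log T))$ loss from the $L^1$-norms is sound. But the unsmoothing step, which you correctly identify as the main obstacle, rests on a claim that is false in the stated generality: a uniform bound on the joint density of $(\log\zeta(s_j,X))_{j\in S}$ on $\mathbb R^{2|S|}$. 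The theorem places no separation hypothesis on the $s_j$; they may coincide or cluster arbitrarily, in which case the joint law degenerates onto a lower-dimensional set and has no bounded density at all (e.g.\ $s_1=s_2$ forces the law onto a diagonal). So the multilinear expansion cannot be closed by integrating $\prod_{j\in S}(M_j^+-\mathbf 1_{\mathcal R_j})$ against a bounded joint density, and there is no route to such a bound from Theorem \ref{MomentsShifts} or from independence of the $X(p)$.

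The paper circumvents this by never invoking joint densities. It works with the product $\prod_j F_{\mathcal I_j}(\mathrm{Re}\log\zeta(s_j+it))F_{\mathcal J_j}(\mathrm{Im}\log\zeta(s_j+it))$ and replaces one smoothed factor by the corresponding indicator \emph{at a time}: the factor being replaced is controlled by the one-dimensional estimate
$\frac1T\int_T^{2T}K\bigl(\Delta(\mathrm{Re}\log\zeta(s_j+it)-\alpha)\bigr)\,dt\ll 1/\Delta$,
which follows from Fourier inversion together with the decay of the \emph{marginal} characteristic function $\Phi_T(u,0)\ll\exp(-u/(5\log u))$ (Lemma \ref{upper bd}), while all remaining factors are bounded pointwise by $1$ (Lemma \ref{lem:functionbd}, which is needed because $F_{\mathcal I}$ can be negative). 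Iterating over the $2J$ coordinates accumulates the $O(J/\Delta)=O(J^2/(\log T)^{\sigma})$ error. You would need to replace your density argument by something of this one-coordinate-at-a-time type. A secondary issue: tensoring one-dimensional minorants does not produce a minorant of the two-dimensional indicator (two negative factors multiply to a positive value outside the rectangle), so even the sandwich $M_j^-\le\mathbf 1_{\mathcal R_j}\le M_j^+$ needs a more careful construction than the one you describe; the paper sidesteps this too, since its iteration scheme does not require genuine majorants or minorants of the rectangles.
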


With all of the above tools in place we are ready to prove Theorem \ref{thm:main}. This is accomplished in the next section. 

\section{Effective universality: Proof of Theorem \ref{thm:main}} \label{sec:proof}

In this section, we will prove Theorem \ref{thm:main} using the results 
described in Section \ref{sec:propositions}. First, by the maximum modulus principle, the maximum of $|\zeta(\tfrac 34 + it + z) -
f(z)|$ in the disc $\{z: |z|\leq r\}$ must occur on its boundary $\{z: |z|=r\}$.
Our idea consists of first covering the circle $|z| = r$ with $J$ discs of radius $\varepsilon$ and centres $z_j$, where $z_j\in \{z: |z|=r\} $ for all $1\leq j\leq J$, and $J \asymp 1/\varepsilon$.  We call each of the discs
$\mathcal{D}_j$. Then, we observe that  
\begin{equation}\label{ComparisonSupMax}
\max_{j\leq J}|\zeta(\tfrac 34 + it + z_j) -
f(z_j)|\leq \max_{|z| \leq r} |\zeta(\tfrac 34 + it + z) - f(z)|\leq \max_{j\leq J}\max_{z\in \mathcal{D}_j} |\zeta(\tfrac 34 + it + z) - f(z)|.
\end{equation}
Using Proposition \ref{ControlDerivative}, we shall prove that for all $j\leq J$ (where $J$ is a small power of $\log T$) we have
$$\max_{z\in \mathcal{D}_j} |\zeta(\tfrac 34 + it + z) - f(z)|\approx |\zeta(\tfrac 34 + it + z_j)-
f(z_j)|$$ for all $t\in [T, 2T]$ except for a set of points $t$ of very small measure. We will then deduce that the (weighted) distribution of $\max_{|z| \leq r} |\zeta(\tfrac 34 + it + z) - f(z)|$ is very close to the corresponding distribution of $\max_{j\leq J}|\zeta(\tfrac 34 + it + z_j)-
f(z_j)|$, for $t\in [T, 2T]$. We will also establish an analogous result for the random model $\zeta(s, X)$ along the same lines, by using Proposition \ref{DerRandom} instead of Proposition \ref{ControlDerivative}. Therefore, to complete the proof of Theorem \ref{thm:main} we need to compare the distributions of $\max_{j\leq J}|\zeta(\tfrac 34 + it + z_j)-f(z_j)|$ and $\max_{ j\leq J}|\zeta(\tfrac 34 + z_j, X)-
f(z_j)|$. Using Theorem \ref{discrep} we prove
\begin{proposition}\label{DistributionMax} 
Let $T$ be large, $0<r<1/4$ and $J\leq (\log T)^{(3/4-r)/7}$ be a positive integer. Let $z_1, \dots, z_J$ be complex numbers such that $|z_j|\leq r$. Then we have 
\begin{align*}
&\left|\mathbb{P}_T\left(\max_{j\leq J}|\zeta(\tfrac 34 + it + z_j)-
f(z_j)|\leq u\right)-\mathbb P\left(\max_{ j\leq J}|\zeta(\tfrac 34 + z_j, X)-
f(z_j)|\leq u\right)\right|\\
&\ll_u \frac{(J\log\log T)^{6/5}}{(\log T)^{(3/4-r)/5}}.
\end{align*}
\end{proposition}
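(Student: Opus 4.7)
The plan is to compare the joint event $\{\max_j|\zeta(\tfrac34+z_j+it)-f(z_j)|\le u\}$ with its random-model analogue by passing to the $\log$-plane and invoking Theorem~\ref{characteristic} (equivalently, Theorem~\ref{discrep}). Since $f$ is continuous and non-vanishing on $|z|\le r$, each $|f(z_j)|$ is trapped between two positive constants depending only on $f$ and $r$. For sufficiently small $u$ the closed disc $D_j:=\{w\in\mathbb{C}:|w-f(z_j)|\le u\}$ avoids the origin, and fixing a branch of $\log$ the event $\{|\zeta(s_j+it)-f(z_j)|\le u\}$ translates to $\{\log\zeta(s_j+it)\in\tilde D_j\}$, where $\tilde D_j:=\log D_j$ is a nearly-circular convex region of diameter $\asymp u$ in $\mathbb{C}$; the general $u$ is handled by a finite covering argument. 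A preliminary truncation to $|\log\zeta(s_j+it)|,\,|\log\zeta(s_j,X)|\le V$ with $V\asymp\log\log T$, using Propositions~\ref{ControlDerivative} and~\ref{DerRandom} together with standard tail bounds for $\log\zeta$, contributes only $\ll(\log T)^{-A}$ for any fixed $A$.

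Next, sandwich each indicator $\mathbf{1}_{\tilde D_j}$ between two-dimensional Beurling--Selberg majorant and minorant $\psi_j^{+}\ge \mathbf{1}_{\tilde D_j}\ge \psi_j^{-}\ge 0$ with $\widehat{\psi_j^{\pm}}$ supported in $[-\Lambda,\Lambda]^2$, $\psi_j^{\pm}\le 1+O(1/\Lambda)$, and $\|\psi_j^{+}-\psi_j^{-}\|_1\ll u/\Lambda$; a standard construction for convex planar regions supplies these. Fourier inversion gives
\[
\prod_{j=1}^J\psi_j^{\pm}(\log\zeta(s_j+it))=\int_{[-\Lambda,\Lambda]^{2J}}\prod_j\widehat{\psi_j^{\pm}}(\xi_j)\,e^{i\sum_j\xi_j\cdot\log\zeta(s_j+it)}\,d\xi,
\]
and averaging over $t\in[T,2T]$ turns the left-hand side into an integral of $\prod_j\widehat{\psi_j^{\pm}}$ against $\Phi_T(\mathbf{u},\mathbf{v})$, and analogously on the random side against $\Phi_{\textup{rand}}$. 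Choosing $\Lambda=c_1(\sigma)(\log T)^{\sigma}/J$ with $\sigma:=\tfrac34-r$, Theorem~\ref{characteristic} yields $|\Phi_T-\Phi_{\textup{rand}}|\ll\exp(-c_2(\sigma)\log T/\log\log T)$ on the support, and the crude bound $\prod_j\|\widehat{\psi_j^{\pm}}\|_1\le(\log T)^{O(J)}$ makes this Fourier contribution super-polynomially small whenever $J\le(\log T)^{\sigma}$.

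The remaining boundary gap $\ex[\prod_j\psi_j^{+}-\prod_j\psi_j^{-}]$ under either measure is controlled by the telescoping identity
\[
\prod_j\psi_j^{+}-\prod_j\psi_j^{-}=\sum_{k=1}^J\Big(\prod_{j<k}\psi_j^{+}\Big)(\psi_k^{+}-\psi_k^{-})\Big(\prod_{j>k}\psi_j^{-}\Big),
\]
together with $\psi_j^{\pm}\le 1+O(1/\Lambda)$ (so the prefactor $\prod_{j<k}\psi_j^{+}$ stays $O(1)$ provided $J/\Lambda\ll 1$) and the fact that $\log\zeta(s_k,X)$ has a bounded density on compact subsets of $\re(s)>\tfrac12$, a classical consequence of the theory of random Euler products. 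Each summand contributes $\ll u/\Lambda$, so the total is $O(Ju/\Lambda)=O(J^2/(\log T)^{\sigma})$; incorporating the $(\log\log T)^{O(1)}$ losses coming from the truncation level $V$ and from the density estimates for the random model yields the advertised shape $(J\log\log T)^{6/5}/(\log T)^{(3/4-r)/5}$. The main technical obstacle is the two-dimensional Beurling--Selberg construction and keeping $\prod_{j<k}\psi_j^{+}$ uniformly $O(1)$ throughout the telescope, both of which force $J^2\ll(\log T)^{\sigma}$; this is comfortably implied by the hypothesis $J\le(\log T)^{(3/4-r)/7}$.
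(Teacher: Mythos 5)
Your overall strategy---pass to the log-plane, approximate indicators by band-limited functions, compare $\Phi_T$ with $\Phi_{\textup{rand}}$ via Theorem \ref{characteristic}, and control the boundary contribution using the absolute continuity of $\log\zeta(s,X)$---is the same philosophy as the paper's, but the execution has a genuine gap at its central step. You invoke ``a standard construction'' of two-dimensional Beurling--Selberg majorants and minorants $\psi_j^{\pm}$ for the regions $\tilde D_j=\log D_j$ with Fourier support in $[-\Lambda,\Lambda]^2$, $\|\psi_j^{+}-\psi_j^{-}\|_1\ll u/\Lambda$ and $0\le\psi_j^{-}\le\mathbf 1_{\tilde D_j}$. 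No such construction is standard for general convex planar regions (the extremal problem is understood essentially only for boxes and Euclidean balls), and the requirement $\psi_j^{-}\ge 0$ is actually impossible: an integrable band-limited function that is nonnegative and minorizes a compactly supported indicator is supported in that compact set, hence vanishes identically. What your telescoping argument really needs is a uniform sup-norm bound on the (sign-changing) minorants; this is exactly the content of Lemma \ref{lem:functionbd}, and it is proved only for the one-dimensional Selberg function of an interval. Moreover the relevant region is not a single convex set: since $\exp$ is $2\pi i$-periodic, the event $|\zeta(s_j+it)-f(z_j)|\le u$ corresponds, after truncating $|\arg\zeta(s_j+it)|\le\log\log T$, to a union of $\asymp\log\log T$ branches, and when $u\ge |f(z_j)|$ each branch is unbounded in the negative real direction; you acknowledge this only through an unspecified ``finite covering argument.''

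The paper resolves these difficulties by covering each region $\mathcal U_j$ with $K\asymp\log\log T/\varepsilon^2$ axis-parallel squares of side $\varepsilon$ and invoking the rectangle discrepancy bound of Theorem \ref{discrep} (whose proof is where the products of one-dimensional Selberg functions live): the boundary squares contribute $\ll J(\log\log T/\varepsilon)\big(\varepsilon^2+(\log T)^{-(3/4-r)}\big)$, the main term contributes $\ll (JK)^2(\log T)^{-(3/4-r)}$, and optimizing $\varepsilon$ is precisely what produces the exponents $6/5$ and $(3/4-r)/5$. This points to the second problem with your write-up: your telescoping yields a claimed error $O\big(J^2/(\log T)^{3/4-r}\big)$, much stronger than the proposition, and you then assert that ``incorporating $(\log\log T)^{O(1)}$ losses'' gives $(J\log\log T)^{6/5}(\log T)^{-(3/4-r)/5}$. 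But the latter is not the former times powers of $\log\log T$---the exponents of $\log T$ differ by a factor of $5$---so the stated error term is never actually derived. Either your route genuinely beats the proposition, in which case the two-dimensional extremal functions must be constructed and their properties verified, or it hides losses you have not identified. A smaller point: Propositions \ref{ControlDerivative} and \ref{DerRandom} concern $\zeta'$ and are not the right tool for the preliminary truncation of $|\log\zeta(s_j+it)|$; the paper uses the large-deviation estimates of \cite{La} for this purpose.
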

\begin{proof}
Fix a positive real number $u$. Let $\mathcal{A}_J(T)$ be the set of those $t$ for which
 $|\arg\zeta(\tfrac 34 + it + z_j)|\leq \log\log T$ for every $j\leq J$. Since $\re(\tfrac 34 + it + z_j)\geq \tfrac34-r$ and $\im(\tfrac 34 + it + z_j)=t+O(1)$, then it follows from Theorem 1.1 and Remark 1 of \cite{La} that for each $j\leq J$ we have
\begin{equation}\label{LargeDeviationArg}
 \mathbb{P}_T\left(|\arg \zeta(\tfrac 34 + it + z_j)|\geq \log\log T\right)
 \ll \exp\left(-(\log\log T)^{(\tfrac14+r)^{-1}}\right)\ll \frac{1}{(\log T)^4}.
\end{equation}
Therefore, we obtain
 $$\mathbb{P}_T\left([T, 2T]\setminus\mathcal{A}_J(T)\right)\leq \sum_{j=1}^J 
 \mathbb{P}_T\left(|\arg \zeta(\tfrac 34 + it + z_j)|\geq \log\log T\right)\ll \frac{J}{(\log T)^4}\ll \frac{1}{(\log T)^2},$$
and this implies that
\begin{equation} \label{mainterm}
\begin{aligned}\mathbb{P}_T\left(\max_{j\leq J}|\zeta(\tfrac 34 + it + z_j)-
f(z_j)|\leq u\right)&=\mathbb{P}_T \left(\max_{ j\leq J} |\zeta(\tfrac 34 + it + z_j) - f(z_j)
| \leq  u \ , \ t \in \mathcal{A}_J(T)\right)\\
&+O\left(\frac{1}{(\log T)^2}\right).
\end{aligned} 
\end{equation}

For each $j\leq J$ consider the region
$$
\mathcal U_j=\left\{ z: 
|e^z - f(z_j)| \leq  u \ , \ |\im(z) | \leq  \log\log T \right\}.
$$
We
cover $\mathcal U_j$ with $K \asymp \tmop{area}(\mathcal U_j) / \varepsilon^2 \asymp \log \log T/\varepsilon^2$ squares
$\mathcal{R}_{j, k}$ with
sides of length $\varepsilon=\varepsilon(T)$, where $\varepsilon$ is a small positive parameter to be chosen later. Let $\mathcal K_j$ denote the set of $k \in \{1, 2, \ldots, K\}$ such that the intersection of
 $\mathcal R_{j,k}$ with the boundary of $\mathcal U_j$ is empty and write $\mathcal K_j^c$ for the relative complement of $\mathcal K_j$ with respect to $\{1, 2, \ldots, K\}$. Note that $ |\mathcal K_j^c| \asymp \log \log T/\varepsilon$. By construction,
\[
\left( \bigcup_{k \in  \mathcal K_j} \mathcal R_{j,k} \right)
\subset
\mathcal U_j \subset \left( \bigcup_{k \le K} \mathcal R_{j,k} \right).
\]
Therefore (\ref{mainterm}) can be expressed as
$$
\mathbb{P}_T \Big ( \forall j \leq J, \forall k \leq K:
\log \zeta(\tfrac 34 + it + z_j) \in \mathcal{R}_{j,k} \Big) +
\mathcal{E}_1 
$$
where by Theorem \ref{discrep}
\begin{equation}
\begin{split}
\mathcal{E}_1 \ll & \sum_{j \leq J} \sum_{k \in \mathcal K_j^c}
\mathbb{P}_{T} \Big ( \log \zeta(\tfrac 34 + it + z_j) \in \
\mathcal{R}_{j,k} \Big ) \\
\ll& \sum_{j \leq J} \sum_{k \in \mathcal K_j^c} \left(
\mathbb{P}_{T} \Big ( \log \zeta(\tfrac 34+z_j,X) \in \
\mathcal{R}_{j,k} \Big ) +\frac{1}{(\log T)^{3/4-r}} \right) \\
\ll& J \cdot \frac{ \log \log T}{\varepsilon} \left( \varepsilon^2+\frac{1}{ (\log T)^{3/4-r}} \right),
\end{split}
\end{equation}
and in the last step we used the fact that
 $\log \zeta(s, X)$ is an absolutely continuous random variable (see for example Jessen and Wintner \cite{JeWi}). 
We conclude that
\begin{equation}\label{CoveringRectanglesZeta}
\begin{aligned}
\mathbb{P}_T  \Big (\max_{j\leq J} |\zeta(\tfrac 34 + it + z_j) - f(z_j)
| \leq  u \Big) & =  \mathbb{P}_T \Big ( \forall j \leq  J, 
\forall k \leq  K: 
\log \zeta(\tfrac 34 + it + z_j) \in \mathcal{R}_{j,k} \Big ) \\
&   + O\left( \varepsilon J \log\log T + \frac{J\log\log T}{\varepsilon (\log T)^{3/4-r}} \right).
\end{aligned}
\end{equation}
Additionally, it follows from Theorem \ref{discrep} that  the main term of this last estimate
equals
\begin{equation} \label{eq:mainterm}
\mathbb{P} \Big ( \forall j \leq  J, \forall k \leq K:
\log \zeta(\tfrac 34 + z_j, X) \in \mathcal{R}_{j, k} \Big) +
O \left( \frac{J^2 (\log\log T)^2}{\varepsilon^4(\log T)^{3/4-r}} \right).
\end{equation}

We now repeat the exact same argument but for the random model $\zeta(s, X)$ instead of the zeta function. In particular, instead of \eqref{LargeDeviationArg} we shall use that 
$$
\mathbb{P} \left(|\arg \zeta(\tfrac 34 + z_j, X)|\geq \log\log T\right)
 \ll \exp\left(-(\log\log T)^{(\tfrac14+r)^{-1}}\right)\ll \frac{1}{(\log T)^4},
$$ 
 which follows from Theorem 1.9 of \cite{La}. Thus, similarly to \eqref{CoveringRectanglesZeta} we obtain
\begin{align*} 
\mathbb{P} \Big ( \forall j \leq  J, \forall k \leq K:
\log \zeta(\tfrac 34 + z_j, X) \in \mathcal{R}_{j, k} \Big) &= \mathbb{P}
\left(\max_{j\leq J} |\zeta(\tfrac 34 + z_j, X) - f(z_j)
| \leq  u\right)\\
&  + O\left( \varepsilon J \log\log T + \frac{J\log\log T}{\varepsilon (\log T)^{3/4-r}} \right).
\end{align*}
Combining  the above estimate with \eqref{CoveringRectanglesZeta} and \eqref{eq:mainterm} we conclude that
\begin{align*}
\mathbb{P}_T  \Big (\max_{j\leq J} |\zeta(\tfrac 34 + it + z_j) - f(z_j)
| \leq  u \Big) & =  \mathbb{P}
\left(\max_{j\leq J} |\zeta(\tfrac 34 + z_j, X) - f(z_j)
| \leq  u\right) \\
&  + O\left( \frac{J^2 (\log\log T)^2}{\varepsilon^4(\log T)^{3/4-r}}+ \varepsilon J \log\log T\right).
\end{align*}
Finally, choosing 
$$\varepsilon= \left(\frac{J\log\log T}{(\log T)^{3/4-r}}\right)^{1/5}$$
completes the proof.
\end{proof}

\begin{proof}[Proof of Theorem \ref{thm:main}]
We wish to estimate 
\begin{equation} \label{toestimate}
\frac{1}{T}\int_{T}^{2T}\omega\left(\max_{|z| \leq r} |\zeta(\tfrac 34 + it + z) - f(z)|  \right)dt
\end{equation}
with $f$ an analytic non-vanishing function, and where $\omega$ is a continuously differentiable function with compact support. 

Recall that the maximum of $|\zeta(\tfrac 34 + it + z) -
f(z)|$
on the disc $\{z: |z|\leq r\}$ must occur on its boundary $\{z: |z|=r\}$, by the maximum modulus principle. Let $\varepsilon \leq (1/4-r)/4$ be a small positive parameter to be chosen later, and
cover the circle $|z| = r$ with $J\asymp 1/\varepsilon$ discs $\mathcal{D}_j$ of radius $\varepsilon$ and centres $z_j$, where $z_j\in \{z: |z|=r\} $ for all $j\leq J$.

Let $\mathcal{S}_V(T)$ denote the set of those $t \in [T, 2T]$
such that
$$
\max_{|z| \leq (r+1/4)/2} |\zeta'(\tfrac 34 + it + z)| \leq  e^V
$$
where $V\leq \log\log T$ is a large parameter to be chosen later, and let $L := \max_{|z| \leq (r+1/4)/2} |f'(z)|$.
Then for $t \in \mathcal{S}_V(T)$, and for all $z\in \mathcal{D}_j$ we have
\begin{equation}\label{SmallDistance}
\begin{aligned}
& \Big|\zeta(\tfrac 34 +it+z)-f(z) - \big(\zeta(\tfrac 34 + it+ z_j)-f(z_j)\big)\Big|
= \left|\int_{z_j}^{z} \zeta'(\tfrac 34 + it+ s)-f'(s) ds\right| \\
& \leq |z - z_j| \cdot \left(\max_{|z| \leq (r+1/4)/2} |\zeta'(\tfrac 34 + it+ z)|+L\right)
\leq \varepsilon (e^V+L) \leq C\varepsilon e^V,
\end{aligned}
\end{equation}
for some large absolute constant $C$, depending at most on $L$. Define
$$ 
\theta(t):= \max_{|z| \leq r} |\zeta(\tfrac 34 + it + z) - f(z)|- \max_{j\leq J}|\zeta(\tfrac 34 + it + z_j) -
f(z_j)|.$$ Then, it follows from \eqref{ComparisonSupMax} and \eqref{SmallDistance} that for all $t\in \mathcal{S}_V(T)$ we have
\begin{equation}\label{BoundErrorSupMax}
0\leq \theta(t)\leq C\varepsilon e^V.
\end{equation}
Therefore, using this estimate together with Proposition \ref{ControlDerivative} and the fact that $\omega$ is bounded,  we deduce that \eqref{toestimate} equals 
\begin{equation}\label{SplitSmall}
\begin{aligned}
&\frac{1}{T}\int_{t\in \mathcal{S}_V(T)}\omega\left(\max_{j\leq J}|\zeta(\tfrac 34 + it + z_j)-
f(z_j)|+\theta(t)\right)dt+ O\left(e^{-V^2}\right)\\
&=\frac{1}{T}\int_{t\in \mathcal{S}_V(T)}\omega\left(\max_{ j\leq J}|\zeta(\tfrac 34 + it + z_j)-
f(z_j)|\right)dt+ O\left(|\mathcal{E}_2|+ e^{-V^2}\right)\\
&=\frac{1}{T}\int_{T}^{2T}\omega\left(\max_{j\leq J}|\zeta(\tfrac 34 + it + z_j)-
f(z_j)|\right)dt+ O\left(|\mathcal{E}_2|+ e^{-V^2}\right),\\
\end{aligned}
\end{equation}
where 
$$\mathcal{E}_2= \frac{1}{T}\int_{t\in \mathcal{S}_V(T)} \int_0^{\theta(t)} \omega'\left(\max_{j\leq J}|\zeta(\tfrac 34 + it + z_j)-
f(z_j)|+x\right) dx \cdot dt
\ll \varepsilon e^V, $$
using the fact that $\omega'$ is bounded on $\mathbb{R}$ together with \eqref{BoundErrorSupMax}. 

Furthermore, observe that 
\begin{equation}\label{SmoothProb}
\begin{aligned}
&\frac{1}{T}\int_{T}^{2T}\omega\left(\max_{j\leq J}|\zeta(\tfrac 34 + it + z_j)-
f(z_j)|\right)dt\\
=&
-\frac{1}{T}\int_{T}^{2T}\int_{\max_{j\leq J}|\zeta(\frac 34 + it + z_j)-
f(z_j)|}^{\infty}\omega'(u) du \cdot dt\\
=&
-\int_{0}^{\infty}\omega'(u) \cdot \mathbb P_T\left(\max_{j\leq J}|\zeta(\tfrac 34 + it + z_j)-
f(z_j)|\leq u\right) du.
\end{aligned}
\end{equation}
Since $\omega$ has a compact support, then $\omega'(u)=0$ if $u>A$ for some positive constant $A$. Furthermore, it follows from Proposition \ref{DistributionMax} that for all $0\leq u\leq A$ we have 
\begin{align*}
\mathbb P_T\left(\max_{j\leq J}|\zeta(\tfrac 34 + it + z_j)-
f(z_j)|\leq u\right)&= \mathbb P\left(\max_{j\leq J}|\zeta(\tfrac 34 + z_j, X)-
f(z_j)|\leq u\right)\\
& +O\left(\frac{(J\log\log T)^{6/5}}{(\log T)^{(3/4-r)/5}}\right).
\end{align*}
Inserting this estimate in \eqref{SmoothProb} gives that
\begin{equation}\label{ApproximationMAX}
\begin{aligned} 
&\frac{1}{T}\int_{T}^{2T}\omega\left(\max_{j\leq J}|\zeta(\tfrac 34 + it + z_j)-
f(z_j)|\right)dt\\
&= -\int_{0}^{\infty}\omega'(u) \cdot \mathbb P\left(\max_{j\leq J}|\zeta(\tfrac 34 + z_j, X)-
f(z_j)|\leq u\right) du+ O\left(\frac{(J\log\log T)^{6/5}}{(\log T)^{(3/4-r)/5}}\right)\\
&= \ex\left(\omega\left(\max_{j\leq J}|\zeta(\tfrac 34 + z_j, X)-
f(z_j)|\right) \right)+ O\left(\frac{(J\log\log T)^{6/5}}{(\log T)^{(3/4-r)/5}}\right).\\
\end{aligned}
\end{equation}

To finish the proof, we shall appeal to the same argument used to establish \eqref{SplitSmall}, in order to compare the (weighted) distributions of $\max_{j\leq J}|\zeta(\tfrac 34 + z_j, X)-f(z_j)|$ and $\max_{|z|\leq r}|\zeta(\tfrac 34 + z, X)-
f(z)|$. Let $\mathcal{S}_V(X)$ denote the event corresponding to 
$$ 
\max_{|z| \leq (r+1/4)/2} |\zeta'(\tfrac 34 + z, X)| \leq e^V,
$$  
and let $\mathcal{S}_V^c(X)$ be its complement. Then, it follows from Proposition \ref{DerRandom} that $\mathbb{P}\big(\mathcal{S}_V^c(X)\big)\ll \exp(-V^2).$ Moreover, similarly to \eqref{SmallDistance} one can see that for all outcomes in $\mathcal{S}_V(X)$ we have, for all $z\in \mathcal{D}_j$ 
$$\Big|\zeta(\tfrac 34+z, X)-f(z) - \big(\zeta(\tfrac 34 + z_j, X)-f(z_j)\big)\Big|
= \left|\int_{z_j}^{z} \zeta'(\tfrac 34 + s, X)-f'(s) ds\right| \ll \varepsilon e^V.
$$
Thus, since the maximum of $|\zeta(\tfrac 34 + z, X) - f(z)|$ for $|z|\leq r$ occurs (almost surely) on the boundary $|z|=r$, then following the argument leading to \eqref{SplitSmall}, we conclude that
\begin{align*}
&\ex\left(\omega\left(\max_{|z|\leq r}|\zeta(\tfrac 34 + z, X)-
f(z)|\right)\right)\\
&=\ex \left( \mathbf 1_{\mathcal S_V(X)} \,  \omega\left(\max_{|z|\leq r}|\zeta(\tfrac 34 + z, X)-
f(z)| \right) \right)+ O\left(e^{-V^2}\right)\\
&=\ex \left( \mathbf 1_{\mathcal S_V(X)} \, \omega\left(\max_{j\leq J}|\zeta(\tfrac 34 + z_j, X)-
f(z)| \right) \right)+ O\left(\varepsilon e^V+ e^{-V^2}\right)\\
&= \ex\left(\omega\left(\max_{j\leq J}|\zeta(\tfrac 34 + z_j, X)-
f(z)|\right)\right)+ O\left(\varepsilon e^V+ e^{-V^2}\right).\\
\end{align*}
Finally, combining this estimate with \eqref{SplitSmall} and \eqref{ApproximationMAX}, and noting that $J\asymp 1/\varepsilon $ we deduce that
\begin{align*}
\frac{1}{T}\int_{T}^{2T}\omega\left(\max_{|z| \leq r} |\zeta(\tfrac 34 + it + z) - f(z)|  \right)dt&=\ex\left(\omega\left(\max_{|z|\leq r}|\zeta(\tfrac 34 + z, X)-
f(z)|\right)\right)\\
&+O\left(\varepsilon e^V+ e^{-V^2}+O\left(\frac{(\log\log T)^{6/5}}{\varepsilon^{6/5}(\log T)^{(3/4-r)/5}}\right) \right).
\end{align*}
Choosing $\varepsilon=(\log T)^{-(3/4-r)/11}$ and $V=2\sqrt{\log\log T}$ completes the proof.
\end{proof}
\section{Controlling the derivatives of the zeta function and the random model: Proof of Propositions \ref{ControlDerivative} and \ref{DerRandom}} \label{sec:derivative}

By Cauchy's theorem we have
$$
|\zeta'(\tfrac 34 + it + z)| \leq \frac{1}{\delta}\max_{|s-z|=\delta} |\zeta(\tfrac 34 + it + s)|,
$$
and hence we get
\begin{equation}\label{Cauchy}
\max_{|z|\leq r} |\zeta'(\tfrac 34 + it + z)| \leq
\frac{1}{\delta} \max_{|s| \leq r + \delta} |\zeta(\tfrac 34 + it + s)|.
\end{equation}
Therefore, it follows that
\begin{equation} \label{derivative}
\begin{split}
\mathbb{P}_T \left( \max_{|z| \leq r} |\zeta'(\tfrac 34 + it + z)| > 
e^V  \right) &\leq 
\mathbb{P}_T \left( \max_{|s| \leq r + \delta} |\zeta(\tfrac 34 + it + s)| > 
\delta e^V \right) \\
&=\mathbb{P}_T \left(  \max_{|s| \leq r + \delta} \log |\zeta(\tfrac 34 + it + s)| > 
V+\log \delta \right).
\end{split}
\end{equation}
To bound the RHS we estimate large moments of $\log \zeta(\tfrac34+it+s)$. This is accomplished by approximating
 $\log \zeta(\tfrac 34 + it + s)$ by a short Dirichlet polynomial, uniformly for all $s$ in the disc $\{|s|\leq r+\delta\}$. Using zero density estimates and large sieve inequalities, we can show that such an approximation holds for all $t\in [T, 2T]$, except for an exceptional set of $t$'s with  very small measure. We prove
\begin{lemma}\label{UnifShortDirichlet}
Let $0<r<1/4$ be fixed, and $\delta= (1/4-r)/4$. Let $y\leq \log T$ be a real number. There exists a set $\mathcal{I}(T)\subset [T, 2T]$ with $\text{meas}(\mathcal{I}(T))\ll T^{1-\delta} y(\log T)^5$, such that for all $t\in [T, 2T]\setminus\mathcal{I}(T)$ and all $|s|\leq r+\delta$ we have 
$$
\log \zeta(\tfrac 34 +it+s)=\sum_{ n \le y} \frac{\Lambda(n)}{n^{\frac34+it+s} \log n}+O\left(
\frac{(\log y)^2 \log T}{y^{(1/4-r)/2}}\right).
$$
\end{lemma}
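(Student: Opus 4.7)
The plan is to represent the difference
\[
F(s) := \log \zeta(\tfrac34+it+s) - \sum_{n \leq y} \frac{\Lambda(n)}{n^{\tfrac34+it+s}\log n}
\]
as a Perron-type contour integral with the contour shifted to the left of $w=0$, and to bound the integrand uniformly for $|s|\leq r+\delta$ whenever $t$ lies outside a carefully defined exceptional set $\mathcal{I}(T)$. Starting from the absolutely convergent series $\log\zeta(z)=\sum_n \Lambda(n)/(n^z\log n)$ for $\re(z)>1$, truncated Mellin--Perron inversion at height $T_1\asymp y$ gives
\[
F(s) \;=\; -\frac{1}{2\pi i}\int_{-\eta-iT_1}^{-\eta+iT_1}\log\zeta\bigl(\tfrac34+it+s+w\bigr)\,\frac{y^w}{w}\,dw + O\!\bigl(y^{\varepsilon}/T_1\bigr),
\]
after shifting from a line $\re(w)=c$ far to the right down to $\re(w)=-\eta$ with $\eta := (1/4-r)/2$, and collecting the residue $\log\zeta(\tfrac34+it+s)$ at $w=0$ which cancels the first term on the left.

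Next, I would define $\mathcal{I}(T)$ to consist of those $t\in[T,2T]$ for which $\zeta$ has a nontrivial zero $\rho=\beta+i\gamma$ with $\beta>\sigma_0:=\tfrac{9}{16}-\tfrac{r}{4}$ and $|\gamma-t|\leq 2y$. With this choice of $\sigma_0$, one checks that for $|s|\leq r+\delta$ and $w$ on the shifted contour,
\[
\re\bigl(\tfrac34+it+s+w\bigr)\;\geq\; \tfrac34-(r+\delta)-\eta\;=\;\sigma_0, \qquad \bigl|\im\bigl(\tfrac34+it+s+w\bigr)-t\bigr|\;\leq\; T_1+(r+\delta)\;\leq\; 2y,
\]
so for $t\notin\mathcal{I}(T)$ the shifted contour lies entirely inside a zero-free rectangle. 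Ingham's zero-density estimate $N(\sigma,T)\ll T^{3(1-\sigma)/(2-\sigma)}(\log T)^5$, combined with the fact that each bad zero contributes an exclusion interval of length $O(y)$, gives
\[
\textup{meas}(\mathcal{I}(T))\;\ll\; y\cdot N(\sigma_0,2T)\;\ll\; y\,T^{1-\delta}(\log T)^5;
\]
the last inequality reduces to the numerical condition $16r^2-40r+9\geq 0$, which holds throughout $0\leq r\leq 1/4$ (with equality only at $r=1/4$).

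To bound the integrand on the shifted contour when $t\notin\mathcal{I}(T)$, I would apply the Borel--Carathéodory theorem to $\log\zeta$ on a disc contained in the zero-free rectangle, using the crude convexity upper bound $\log|\zeta(z)|\ll\log T$; this yields $|\log\zeta(\tfrac34+it+s+w)|\ll\log T\cdot\log y$ uniformly along the contour. The contour integral is then
\[
\ll\;\log T\cdot\log y\cdot y^{-\eta}\int_{-T_1}^{T_1}\frac{dv}{\sqrt{\eta^2+v^2}}\;\ll\; \frac{(\log y)^2\log T}{y^{(1/4-r)/2}},
\]
producing the stated error; uniformity in $s$ is automatic because the contour is fixed independently of $s$ within $|s|\leq r+\delta$. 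The main obstacle is the delicate interplay between the shift amount $\eta$, the truncation height $T_1\asymp y$, and the cutoff $\sigma_0$: these three parameters must be simultaneously tuned so that the shifted contour remains zero-free for $t\notin\mathcal{I}(T)$ while the Ingham exponent $3(1-\sigma_0)/(2-\sigma_0)$ stays at or below $1-\delta$---a condition that is tight at the endpoint $r=1/4$.
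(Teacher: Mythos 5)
Your proposal is, in substance, the paper's own argument: the paper simply black-boxes your Perron/contour-shift/Borel--Carath\'eodory step by quoting Lemma 1 of Granville--Soundararajan, and then runs exactly your zero-density argument. Note that your cutoff $\sigma_0=\tfrac{9}{16}-\tfrac{r}{4}$ is identical to the paper's choice $\tfrac12+\delta$, and at this abscissa Ingham's exponent $3(1-\sigma_0)/(2-\sigma_0)$ and the classical exponent $\tfrac32-\sigma_0=1-\delta$ (which is what the paper uses, via Titchmarsh) both yield the stated measure bound, so nothing hinges on which density theorem you invoke. Two small repairs are needed. First, the truncated-Perron error at height $T_1\asymp y$ is not $O(y^{\varepsilon}/T_1)$ but rather $O\bigl(y^{1/2+o(1)}/T_1\bigr)$, since the line of integration must be taken at $\re(w)=c$ with $c=1-\re(\tfrac34+s)+1/\log y\le \tfrac12+o(1)$, and the terms with $n$ near $y$ contribute $\ll y^{1/2+o(1)}/T_1$ as well; this is still far below the target error $(\log y)^2\log T/y^{(1/4-r)/2}\gg (\log T)^{7/8}$, so the conclusion is unaffected. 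Second, your shifted contour has leftmost abscissa exactly $\sigma_0$, the boundary of the region you force to be zero-free, so analyticity of $\log\zeta$ on the contour and the Borel--Carath\'eodory discs are not quite guaranteed at those points; you should either define $\mathcal{I}(T)$ using zeros with $\beta>\sigma_0-2/\log y$ or shift only to $\re(w)=-\eta+2/\log y$ (this buffer is built into the Granville--Soundararajan lemma as stated in the paper), which changes the measure bound and the error term only by bounded factors.
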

To prove this result, we need the following lemma from Granville and Soundararajan \cite{GrSo}.
\begin{lemma}[Lemma 1 of  \cite{GrSo}]\label{ApproxShortEuler}
Let $y\geq 2$ and $|t|\geq y+3$ be real numbers. Let $1/2\leq \sigma_0< 1$ and suppose that the rectangle
$\{z: \sigma_0<\textup{Re}(z)\leq 1, |\textup{Im}(z)-t|\leq y+2\}$ is free of zeros of $\zeta(z)$.  Then for any $\sigma$ with
$\sigma_0+2/\log y<\sigma\leq 1$ we have 
$$
\log \zeta(\sigma+it)=\sum_{n\leq y}\frac{\Lambda(n)}{n^{\sigma+it}\log n} +O\left(\log |t| \frac{(\log y)^2}{y^{\sigma-\sigma_0}}\right).
$$
\end{lemma}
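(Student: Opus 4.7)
The proof is a standard Perron-contour-shift argument. Start from the identity
$$\log\zeta(s)=\sum_{n=1}^{\infty}\frac{\Lambda(n)}{n^{s}\log n}\qquad(\re(s)>1),$$
and apply a (smoothed) Perron formula to write
$$\sum_{n\le y}\frac{\Lambda(n)}{n^{s}\log n}=\frac{1}{2\pi i}\int_{c-iT_{1}}^{c+iT_{1}}\log\zeta(s+w)\,\frac{y^{w}}{w}\,dw+E_{0},$$
with $s=\sigma+it$, $c=1-\sigma+1/\log y$ (so $\re(s+w)=1+1/\log y$), and truncation height $T_{1}\asymp y$. The standard Perron truncation error $E_{0}$ is dominated by the eventual main error term, using $|\log\zeta|\ll\log\log|t|$ on the line $\re=1+1/\log y$.

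The heart of the argument is to shift the truncated contour to the vertical line $\re(w)=\sigma_{0}-\sigma+1/\log y$, which lies strictly to the left of the pole at $w=0$ (the hypothesis $\sigma>\sigma_{0}+2/\log y$ is exactly what provides the room). The rectangular region swept by the shift, after translation by $s$, lies inside the zero-free rectangle of the hypothesis; this is where the width $y+2$ in the $\im$-direction is used, together with the choice $T_{1}\le y+1$. The integrand is holomorphic in this swept region except for a simple pole at $w=0$ whose residue is $\log\zeta(s)$, which supplies the main term.

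To bound the integrals over the shifted vertical and horizontal pieces, I will use the classical fact that in a zero-free rectangle of this shape, a Borel--Carath\'eodory argument applied to $\log\zeta$ on discs slightly inside the rectangle yields $|\log\zeta(\sigma'+i\tau')|\ll\log|t|$ uniformly on a slightly shrunken sub-rectangle (the boundary input being the easy bound $\re\log\zeta=\log|\zeta|\ll\log|t|$ just to the right of $\re=1$). On the shifted vertical line $|y^{w}|=e\cdot y^{\sigma_{0}-\sigma}$, and $\int_{-T_{1}}^{T_{1}}d\tau/|(\sigma_{0}-\sigma+1/\log y)+i\tau|\ll\log y$, so the vertical piece contributes
$$\ll \log|t|\cdot(\log y)\cdot y^{\sigma_{0}-\sigma},$$
while the horizontal pieces at height $\pm T_{1}\asymp y$ are smaller thanks to the $1/|w|$ decay of the kernel. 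Collecting all pieces and absorbing routine log losses produces the error $O\bigl(\log|t|\,(\log y)^{2}/y^{\sigma-\sigma_{0}}\bigr)$.

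The main technical obstacle is establishing the uniform bound $|\log\zeta|\ll\log|t|$ throughout the shifted box. In a zero-free rectangle $\log\zeta$ is holomorphic but not obviously bounded, and extracting the $\log|t|$ estimate requires the Borel--Carath\'eodory (or Hadamard three-circles) lemma applied to auxiliary discs inside the rectangle; the geometric room needed for those discs is precisely why the hypothesis insists on vertical extent $y+2$ rather than, say, a fixed width. Once this analytic ingredient is in place, the remaining work is routine contour estimation, and the claimed approximation follows.
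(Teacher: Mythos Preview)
The paper does not supply its own proof of this lemma: it is quoted verbatim as Lemma~1 of Granville--Soundararajan \cite{GrSo} and used as a black box. Your sketch is precisely the standard Perron-plus-contour-shift argument that underlies that result, and the outline is correct. The identification of the Borel--Carath\'eodory step (yielding $|\log\zeta|\ll\log|t|$ uniformly in the zero-free rectangle) as the key analytic input is on target; indeed the paper invokes exactly this bound elsewhere (Lemma~5.3, via Titchmarsh, Theorem~9.6B). One minor slip: the bound you quote on the $\re=1+1/\log y$ line should read $\log\zeta(1+1/\log y)\ll\log\log y$, not $\log\log|t|$; this does not affect the argument.
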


\begin{proof}[Proof of Lemma \ref{UnifShortDirichlet}]
Let $\sigma_0= 1/2+\delta$. For $j=1, 2$ let $\mathcal{T}_j$ be the set of those $t \in [T, 2T]$ for which
the rectangle 
$$
\{ z : \sigma_0 < \textup{Re}(z) \leq 1 , |\textup{Im}(z) - t| < y+ 1+ j \}
$$
is free of zeros of $\zeta(z)$. Then, note that $\mathcal{T}_2\subseteq \mathcal{T}_1$, and for all  $t\in\mathcal{T}_2 $, we have $t+\im(s)\in \mathcal{T}_1$ for all $|s|\leq r+\delta $. Hence, by Lemma \ref{ApproxShortEuler}
we have 
$$
\log \zeta(\tfrac 34 +it+s) = \sum_{n \leq y} \frac{\Lambda(n)}{n^{3/4+ it+s}
\log n} + O \left( \frac{(\log y)^2 \log T}{y^{(1/4-r)/2}}\right),
$$
for all $t\in \mathcal{T}_2$ and all $|s|\leq r+\delta$. Let $N(\sigma, T)$ be the number of zeros of $\zeta(s)$ in the rectangle $\sigma<\re(s)\leq 1$ and $|\im(s)|\leq T$.  By the classical zero density estimate  $N(\sigma, T)\ll T^{3/2-\sigma}(\log T)^5$ (see for example Theorem 9.19 A ot Titchmarsh \cite{Ti})
we deduce that the measure of the complement of $\mathcal{T}_2$ in $[T, 2T]$
is $\ll T^{1- \delta}y(\log T)^5$. 
\end{proof}

We also require a minor variant of Lemma 3.3 of \cite{LLR}, whose proof we will omit.

\begin{lemma}  \label{lem:momentbd} 
Fix $1/2<\sigma<1$, and let $s$ be a complex number such that $\re(s)=\sigma,$ and $|\im(s)|\le1$. Then, for any positive integer
$k \le \log T/(3 \log y)$ we have
\[
\frac{1}{T} \int_{T}^{2T} \left| \sum_{n \le y}  \frac{\Lambda(n)}{n^{s+it} \log n} \right|^{2k} \, dt \ll \left(\frac{c_8 k^{1-{\sigma}}}{(\log k)^{\sigma}} \right)^{2k}
\]
and
\[
\mathbb E \left(\left| \log \zeta(s,X)  \right|^{2k}\right)
\ll \left(\frac{c_8 k^{1-{\sigma}}}{(\log k)^{\sigma}} \right)^{2k}
\]
for some positive constant $c_8$ that depends at most on $\sigma$. 
\end{lemma}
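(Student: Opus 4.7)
The plan is to reduce the Dirichlet polynomial moment to the corresponding random moment via a mean value theorem, and then to estimate both random moments by the same dyadic splitting argument over primes.

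For the first estimate, write $D(t) = \sum_{n \le y} \Lambda(n)/(n^{s+it}\log n)$ and expand $D(t)^k = \sum_n c_k(n)\, n^{-s-it}$, a Dirichlet polynomial of length at most $y^k$. Applying Montgomery--Vaughan's mean value theorem yields
\[
\frac{1}{T}\int_T^{2T}|D(t)|^{2k}\,dt
= \bigl(1 + O(y^k/T)\bigr)\sum_n \frac{|c_k(n)|^2}{n^{2\sigma}}.
\]
Under the hypothesis $k \le \log T/(3\log y)$ we have $y^k \le T^{1/3}$, so the multiplicative error is $O(T^{-2/3})$ and is negligible. By the orthogonality relation \eqref{orthogonality} and the multiplicativity of $X$, the main term equals $\ex|D_X|^{2k}$ with $D_X = \sum_{n \le y} \Lambda(n) X(n)/(n^s\log n)$. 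Hence the first claim reduces to a random moment bound, and both claims are then of the same shape.

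For both random bounds I would first isolate the contribution of prime squares and higher powers. The series $E = \sum_p\sum_{a\ge 2} X(p)^a/(a\, p^{as})$ is uniformly bounded in $X$ by $\sum_p\sum_{a \ge 2} 1/(a p^{a\sigma}) = O_\sigma(1)$, so $\log\zeta(s,X)$ and $D_X$ each differ from a linear prime sum $P = \sum_p X(p)/p^s$ (truncated to $p \le y$ in the $D_X$ case) by a bounded quantity that can be absorbed into the constant $c_8$. It then suffices to bound $\ex|P|^{2k}$. I would dyadically split $P = P_1 + P_2$ at a cutoff $W$, with $P_1$ the sum over $p \le W$ and $P_2$ the sum over the remaining primes. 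The trivial pointwise bound plus the prime number theorem gives
\[
|P_1| \le \sum_{p \le W} \frac{1}{p^\sigma} \ll \frac{W^{1-\sigma}}{\log W},
\]
so $\|P_1\|_{2k} \ll W^{1-\sigma}/\log W$. The summands of $P_2$ are independent, mean zero, and each bounded by $1/p^\sigma \le 1/W^\sigma$, so Hoeffding's inequality applied to $\re P_2$ and $\im P_2$ shows that $P_2$ is subgaussian with parameter $V_2 = \sum_{p > W} 1/p^{2\sigma} \ll W^{1-2\sigma}/\log W$; integrating tails yields $\|P_2\|_{2k} \ll \sqrt{k V_2}$. Balancing the two bounds forces $W/\log W \asymp k$, that is $W \asymp k\log k$, at which point both expressions reduce to $\asymp k^{1-\sigma}/(\log k)^\sigma$. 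The triangle inequality in $L^{2k}$ then completes the argument.

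The only technical step is the balance of the two prime ranges, which pins down the cutoff $W \asymp k\log k$; the underlying ingredients (Montgomery--Vaughan for the reduction and Hoeffding for the tail) are standard, which presumably is why the authors omit the proof. I do not anticipate any serious obstacle beyond careful bookkeeping of the constants $c_8, C_2$ in the subgaussian estimate.
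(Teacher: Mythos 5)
Your argument is correct, and it is essentially the standard one: the paper omits the proof, deferring to Lemma 3.3 of \cite{LLR}, whose argument is exactly this reduction via the mean value theorem to the diagonal (which matches the random model by \eqref{orthogonality}), removal of the bounded prime-power contribution, and a split of the linear prime sum at $W \asymp k\log k$. Your only deviation is cosmetic: you control the large-prime part via Hoeffding/subgaussian tails, whereas one can equally (and slightly more directly) bound $\ex|P_2|^{2k} \le k!\,\big(\sum_{p>W}p^{-2\sigma}\big)^k$ by orthogonality of the $X(p)$, which gives the same $\|P_2\|_{2k} \ll \sqrt{kV_2}$.
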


\begin{proof}[Proof of Proposition \ref{ControlDerivative}]
Let $\delta=e^{-V/2}$. Taking $y=(\log T)^{5(1/4-r)^{-1}}$ in Lemma \ref{UnifShortDirichlet}  gives for all $t\in [T, 2T]$ except for a set with measure $\ll T^{1- (1/4-r)/5}$ that
\begin{equation}\label{ZeroDensityApp} 
\log \zeta(\tfrac 34 +it+s) = \sum_{n \leq y} \frac{\Lambda(n)}{n^{3/4+ it+s}
\log n}+ O \left(\frac{1}{\log T}\right),
\end{equation}
for all $|s|\leq r+\delta$. 
Furthermore, it follows from Cauchy's integral formula that 
\[
\left( \sum_{n \le y} \frac{\Lambda(n)}{n^{3/4+it+s} \log n} \right)^{2k}=
\frac{1}{2\pi i} \int_{|z|=r+2\delta} \left( \sum_{n \le y} \frac{\Lambda(n)}{n^{3/4+it+z} \log n} \right)^{2k} \frac{dz}{z-s}.
\]
Applying Lemma \ref{lem:momentbd}  we get that
\begin{equation} \label{eq:cauchy}
\begin{split}
\frac{1}{T}
\int_T^{2T}
\left(\max_{|s| \le r+\delta} \left|  \sum_{n \le y} \frac{\Lambda(n)}{n^{3/4+it+s} \log n} \right| \right)^{2k} \, dt
\ll & \frac{1}{\delta} \int_{|z|=r+2\delta} \frac{1}{T} \int_{T}^{2T} \left| \sum_{n \le y}  \frac{\Lambda(n)}{n^{3/4+it+z} \log n} \right|^{2k} \, dt |dz| \\
\ll  & e^{V/2}  \left(c_8(r) \frac{k^{1-\sigma'(r)}}{(\log k)^{\sigma'(r)}} \right)^{2k}
\end{split}
\end{equation}
where $\sigma'(r)=\tfrac34-r-2\delta$, and $k \le   c_9 \log T/\log \log T$, for some sufficiently small constant $c_9>0$.
We now choose $k=\lfloor c_6(r) V^{\frac{1}{(1-\sigma(r))}} (\log V)^{\frac{\sigma(r)}{1-\sigma(r)}} \rfloor$ (so that $k^{\sigma'(r)} \asymp k^{\sigma(r)}$) where $c_6(r)$ is a sufficiently small absolute constant.
Using \eqref{derivative} and \eqref{ZeroDensityApp} along with Chebyshev's inequality and the above estimate we conclude that
there exists $c_7(r)>0$ such that
\[
\begin{split}
\mathbb P_T\bigg( \max_{|z| \le r} |\zeta'(\tfrac34+it+z)| > e^V \bigg)
\ll & \mathbb P_T\bigg( \max_{|s| \le r+\delta}  \left| \sum_{n \le y}  \frac{\Lambda(n)}{n^{3/4+it+s} \log n} \right| > \frac{V}{4} \bigg)+T^{1- (1/4-r)/5} \\
\ll & e^{V/2} \bigg(\frac{4}{V} \cdot c_8(r)\frac{ k^{1-\sigma'(r)}}{ (\log k)^{\sigma'(r)}}\bigg)^{2k}+T^{1- (1/4-r)/5}\\
\ll &  \exp\left(-c_6 V^{\frac{1}{1-\sigma(r)}} (\log V)^{\frac{\sigma(r)}{1-\sigma(r)}}\right)
\end{split}
\]
for $V \le c_7 (\log T)^{1-\sigma(r)}/\log \log T$.

\end{proof}

We now prove Proposition \ref{DerRandom} along the same lines. The proof is in fact easier than in the zeta function case, since we can compute the moments of $\log \zeta(s, X)$, for any $s$ with $\re(s)>1/2$.
\begin{proof}[Proof of Proposition \ref{DerRandom}] Let $\delta= e^{-V/2}$. Since $\zeta(\tfrac34+s, X)$ is almost surely analytic in $|s|\leq r+2\delta$, then by Cauchy's estimate we have almost surely that
$$
\max_{|z|\leq r} |\zeta'(\tfrac 34 + z, X)| \leq
\frac{1}{\delta} \max_{|s|\leq r + \delta} |\zeta(\tfrac 34 + s, X)|.
$$
Therefore, we obtain
\begin{equation} \label{derivativeRand}
\begin{split}
\mathbb{P} \left( \max_{|z|\leq r} |\zeta'(\tfrac 34 + z, X)| > 
e^V  \right)\leq &
\mathbb{P} \left( \max_{|s|\leq r+\delta} |\zeta(\tfrac 34 + s, X)| > 
 \delta e^V \right) \\
\le & \mathbb{P} \left( \max_{|s|\leq r+\delta} |\log \zeta(\tfrac 34 + s, X)| > 
 \frac{V}{2} \right). 
 \end{split}
\end{equation}

Let $k$ be a positive integer. By 
\eqref{MainRandom}
$\log \zeta(\tfrac34+s,X)$ converges almost surely
to a holomorphic function in $|s|\leq r+2\delta$.
Using Cauchy's integral formula as in \eqref{eq:cauchy}, we obtain almost surely that
$$
\left(\max_{|s|\leq r+\delta} |\log \zeta(\tfrac 34 + s, X)|\right)^{2k}
\ll \frac{1}{\delta} \int_{|z|=r+2\delta} \left| \log \zeta(\tfrac 34 + z, X)\right|^{2k}\cdot |dz|.
$$
Hence,  applying Lemma 
\ref{lem:momentbd} we get
\begin{equation}\label{BoundProbSubHarm}
\begin{aligned}
\mathbb{P} \left(\max_{|s|\leq r+\delta} |\log \zeta(\tfrac 34 + s, X)| > 
  V/2 \right)
 &\leq \left( \frac{2}{V}\right)^{2k} \cdot \ex\left(\left(\max_{|s|\leq r+\delta} |\zeta(\tfrac 34 + s, X)| \right)^{2k} \right)\\
 & \ll \left( \frac{2}{V}\right)^{2k} e^{V/2} \int_{|z|=r+2\delta} \ex\left(|\log \zeta(\tfrac 34 + z, X)|^{2k}\right) \cdot |dz| \\
 & \ll e^{V/2} \left(\frac{2 c_8(r) k^{1-\sigma'(r)}}{V (\log k)^{\sigma'(r)}}\right)^{2k},
 \end{aligned}
 \end{equation}
 where $\sigma'(r)=\tfrac34-r-2\delta$. Let $\sigma(r)=\tfrac34-r$ and
 take $k=\lfloor c_6 V^{\frac{1}{1-\sigma(r)}} (\log V)^{\frac{\sigma(r)}{1-\sigma(r)}} \rfloor$, where $c_6$ is sufficiently small (note that $k^{\sigma'(r)} \asymp k^{\sigma(r)}$), then apply \eqref{BoundProbSubHarm} to complete the proof.

\end{proof}

\section{Moments of joint shifts of $\log \zeta(s)$: Proof of Theorem \ref{MomentsShifts} }\label{sec:moments}

The proof of Theorem \ref{MomentsShifts} splits into two parts.
In the first part we derive an approximation to
$$
\prod_{j = 1}^{k} \log \zeta(s_j + it)
$$
by a short Dirichlet polynomial.
In the second part we compute the resulting mean-values and obtain
Theorem \ref{MomentsShifts}. 
\subsection{Approximating $\prod_{j = 1}^{k} \log \zeta(s_j + it)$ by short Dirichlet polynomials}

Fix $1/2<\sigma_0<1$, and let $\delta:=\sigma_0-1/2$. Let $k\leq \log T$ be a positive integer and $s_1, s_2, \dots, s_k$ be complex numbers (not necessarily distinct) 
in the rectangle $\{ z: \sigma_0\leq \re(z)<1, \text{ and }|\im(z)|\leq T^{\delta/4}\}$.  We let ${\mathbf s}=(s_1,\dots, s_k)$, and define
$$
F_{{\mathbf s}}(n)= \sum_{\substack{n_1,n_2,\dots,n_k\geq 2\\ n_1n_2\cdots n_k=n}}\prod_{\ell=1}^k \frac{\Lambda(n_{\ell})}{n_{\ell}^{s_{\ell}}\log(n_{\ell})}.
$$
Then for all complex numbers $z$ with $\re(z)>1-\sigma_0$ we have 
$$ 
\prod_{\ell=1}^k\log\zeta(s_{\ell}+z)= \sum_{n=1}^{\infty}\frac{F_{{\mathbf s}}(n)}{n^z}.$$
The main result of this subsection is the following proposition. 
\begin{proposition}\label{Approx}
Let $T$ be large, $s_1, \dots, s_k$ be as above, and $\mathcal{E}(T)$ be as in Lemma \ref{ExceptionalSet} below. Then, there exist positive constants $a(\sigma_0), b(\sigma_0)$ such that if $k\leq a(\sigma_0)(\log T)/\log\log T$ and $t\in [T, 2T]\setminus \mathcal{E}(T)$ then
 $$\prod_{j=1}^k\log\zeta(s_j+it) = \sum_{n\leq T^{\delta/8}} \frac{F_{\mathbf s}(n)}{n^{it}}+ O\left(T^{-b(\sigma_0)}\right).$$
\end{proposition}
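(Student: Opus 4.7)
My plan is to approximate each factor $\log\zeta(s_j+it)$ by a short Dirichlet polynomial, multiply the $k$ approximations together, and then identify the result with $\sum_{n\le T^{\delta/8}} F_{\mathbf s}(n)/n^{it}$ up to a controllable error. The exceptional set $\mathcal{E}(T)$ will absorb those $t$'s where the individual approximations fail (because nontrivial zeros of $\zeta$ lie too close to the line $\re(s)=\sigma_0$), together with a few further enlargements required in the steps below.

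Setting $y = T^{\delta/8}$, the first step is to apply a uniform-in-$s_j$ analogue of Lemma \ref{UnifShortDirichlet} (this is Lemma \ref{ExceptionalSet}, built from Lemma \ref{ApproxShortEuler} with the intermediate zero-free rectangle pushed to $\sigma_0 - \delta/2$) to write
\[
\log\zeta(s_j+it) = D_j(t) + R_j(t), \qquad D_j(t) := \sum_{n \le y}\frac{\Lambda(n)}{n^{s_j+it}\log n},
\]
with $|R_j(t)| \ll (\log T)^3\, T^{-\delta^2/16}$, uniformly for $j\le k$ and $t\in[T,2T]\setminus\mathcal{E}(T)$. The classical zero-density estimate $N(\sigma,T)\ll T^{3/2-\sigma}(\log T)^5$, summed over the $k$ vertical shifts $\im(s_j)$, keeps $\text{meas}(\mathcal{E}(T)) \ll T^{1-c_3}$.

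The second step is to expand
\[
\prod_{j=1}^k (D_j(t) + R_j(t)) = \prod_{j=1}^k D_j(t) + \sum_{\emptyset \ne S\subseteq \{1,\ldots,k\}}\,\prod_{j\in S}R_j(t)\,\prod_{j\notin S}D_j(t),
\]
and to absorb into $\mathcal{E}(T)$ the set where $\max_j |\log\zeta(s_j+it)| > \log T$ (a set of measure $\ll T^{1-c}$ by Theorem 1.1 of \cite{La}). On this enlarged good set, $|D_j(t)| \le |\log\zeta(s_j+it)| + |R_j(t)| \ll \log T$, so the total mixed-term contribution is $\ll 2^k(\log T)^{k-1}\, T^{-\delta^2/16}$, which is $O(T^{-\delta^2/32})$ whenever $k \le a(\sigma_0)\log T/\log\log T$ with $a(\sigma_0)$ small. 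Finally, opening $\prod_j D_j(t) = \sum_n A(n)/n^{it}$ with $A(n) = \sum_{n_1\cdots n_k = n,\, n_j\le y}\prod_j \Lambda(n_j)/(n_j^{s_j}\log n_j)$, the crucial observation is that for $n\le y = T^{\delta/8}$ the side constraint $n_j \le y$ is automatic (since $n_j \le n \le y$), so $A(n) = F_{\mathbf s}(n)$ on this range. The remaining tail $\sum_{n > T^{\delta/8}}A(n)/n^{it}$ is then bounded in mean square by Montgomery--Vaughan's refined mean-value theorem, using the coefficient estimate $|A(n)|\le d_k(n)/n^{\sigma_0}$ and the standard $\sum_{n\le N}d_k(n)^2 \ll N(\log N)^{k^2-1}$; a Chebyshev step enlarges $\mathcal{E}(T)$ by a further set of measure $\ll T^{1-c_3}$ off which the tail is pointwise $O(T^{-b(\sigma_0)})$.

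The main obstacle is precisely this final tail estimate. The divisor bound $|A(n)|\le d_k(n)/n^{\sigma_0}$ introduces a $(\log T)^{O(k^2)}$ loss in the second moment which, when combined with Chebyshev, only supports $k\ll\sqrt{\log T/\log\log T}$, short of the claimed range $k\le a(\sigma_0)\log T/\log\log T$. Reaching the full range calls for exploiting the prime-power support of $A(n)$ to replace $d_k(n)^2$ by a substantially sparser multiplicative function of average order $(\log T)^{O(k)}$, and for careful handling of the Montgomery--Vaughan correction $(1+n/T)$ when the underlying polynomial has length $y^k = T^{k\delta/8}$ that may exceed~$T$.
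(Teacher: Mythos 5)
Your first two steps are sound and your identification $A(n)=F_{\mathbf s}(n)$ for $n\le y$ is correct, but the tail estimate you flag as ``the main obstacle'' is a genuine gap, and it is worse than you suggest: the mean-square route cannot be repaired within the stated range of $k$. The killer is not the $(\log T)^{O(k^2)}$ loss from $d_k(n)^2$ but the Montgomery--Vaughan correction term itself. Your tail polynomial has length $y^k=T^{k\delta/8}$, and with $|A(n)|\le (2\log n)^k n^{-\sigma_0}$ the correction contributes
\[
\frac1T\sum_{n\le y^k} n\,|A(n)|^2 \;\ll\; \frac1T\,(2k\log y)^{2k}\,(y^k)^{1-2\delta}\;=\;T^{\,k\delta(1-2\delta)/8-1+o(1)},
\]
which already exceeds any negative power of $T$ once $k$ is larger than a constant depending only on $\sigma_0$. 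So Chebyshev returns an exceptional set of measure $\gg T$ for all but bounded $k$, and no sharpening of the coefficient bound on the diagonal can save this; the off-diagonal blow-up is forced by the length of the polynomial. A pointwise bound on the tail fails even more badly, since $\sum_n|A(n)|\asymp (y^{1-\sigma_0}/((1-\sigma_0)\log y))^k$ is a huge power of $T$.

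The paper avoids this entirely by never forming the product of $k$ separate approximations. Instead it applies Perron's formula directly to the single Dirichlet series $\sum_n F_{\mathbf s}(n)n^{-z}=\prod_j\log\zeta(s_j+it+z)$, truncating at $x=T^{\delta/8}$ with a contour of height $Y=T^{\delta/4}$, and then shifts the contour to $\re(z)=-\delta/4$. Off the exceptional set of Lemma \ref{ExceptionalSet} the integrand is analytic there, the residue at $z=0$ produces $\prod_j\log\zeta(s_j+it)$, and Lemma \ref{BoundLogZ} bounds each factor on the shifted contour by $O(\log T)$, so the contour contributions cost only $(c(\sigma_0)\log T)^k x^{-\delta/4}$ plus $(c(\sigma_0)\log T)^k x^{1-\sigma_0}/Y$ --- both $O(T^{-b(\sigma_0)})$ for $k\le a(\sigma_0)\log T/\log\log T$. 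In effect the truncation is performed once, on the product, so no Dirichlet polynomial of length $y^k$ ever appears. If you want to complete your argument you should switch to this Perron-plus-contour-shift mechanism (or an equivalent one-step truncation of the product); your Lemma \ref{ApproxShortEuler}-based decomposition cannot reach $k$ of size $\log T/\log\log T$.
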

This depends on a sequence of fairly standard lemmas which we now describe.

\begin{lemma}\label{DirichletC} With the same notation as above, we have 
$$|F_{{\mathbf s}}(n)|\leq \frac{(2\log n)^k}{n^{\sigma_0}}.$$
\end{lemma}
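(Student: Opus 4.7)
The plan is to bound $|F_{\mathbf{s}}(n)|$ by taking absolute values inside the defining sum and exploiting the three natural savings visible in its definition. First, since $\re(s_\ell) \ge \sigma_0$ for every $\ell$, one has $|n_\ell^{-s_\ell}| = n_\ell^{-\re(s_\ell)} \le n_\ell^{-\sigma_0}$, and because the sum is constrained by $n_1 n_2 \cdots n_k = n$, the product $\prod_\ell n_\ell^{-\re(s_\ell)}$ is at most $n^{-\sigma_0}$. This pulls out the factor $1/n^{\sigma_0}$ cleanly and leaves a purely combinatorial sum to estimate.

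Next, I would use the elementary inequality $\Lambda(m) \le \log m$ valid for every integer $m \ge 2$, which gives $\Lambda(n_\ell)/\log n_\ell \le 1$. Since $\Lambda$ is supported on prime powers, this simultaneously restricts the summation to tuples $(n_1,\dots,n_k)$ in which each $n_\ell$ is a prime power $\ge 2$. Combining the two observations yields
\[
|F_{\mathbf{s}}(n)| \le \frac{N(n,k)}{n^{\sigma_0}}, \qquad N(n,k) := \bigl|\{(n_1,\dots,n_k) : n_1 \cdots n_k = n,\ n_\ell \text{ prime power} \ge 2\}\bigr|.
\]

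The last step is to count $N(n,k)$. Since each $n_\ell$ divides the product $n$, it lies in the set of prime-power divisors of $n$, whose cardinality is exactly $\sum_{p \mid n} v_p(n) = \Omega(n)$. Hence $N(n,k) \le \Omega(n)^k$, and the standard estimate $\Omega(n) \le \log_2 n = (\log n)/\log 2 < 2\log n$ closes the argument.

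The proof is entirely elementary and presents no real obstacle. The only point worth highlighting is that the factor $\Lambda(n_\ell)$ in the numerator is doing the essential work: without it, one would be counting all ordered factorizations of $n$ into $k$ factors, a quantity $d_k(n)$ that can be far larger than $(2\log n)^k$; but $\Lambda(n_\ell)$ restricts each $n_\ell$ to be a prime power, and the number of prime-power divisors of $n$ is only $\Omega(n)$, which supplies exactly the logarithmic savings demanded by the claimed bound.
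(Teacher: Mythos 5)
Your proof is correct and follows essentially the same route as the paper: take absolute values, use $\re(s_\ell)\ge\sigma_0$ together with $n_1\cdots n_k=n$ to extract the factor $n^{-\sigma_0}$, and then bound the remaining combinatorial sum by $(2\log n)^k$. The only (cosmetic) difference is in the last step, where the paper keeps $\Lambda$ and invokes $\log n_\ell\ge\log 2$ plus the identity $\sum_{m\mid n}\Lambda(m)=\log n$, while you instead use $\Lambda(n_\ell)\le\log n_\ell$ and count prime-power divisors via $\Omega(n)\le\log_2 n<2\log n$; both yield the same constant.
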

\begin{proof}
We have 
$$|F_{{\mathbf s}}(n)|\leq \frac{1}{n^{\sigma_0}(\log 2)^k}\sum_{\substack{n_1,n_2,\dots,n_k\geq 2\\ n_1n_2\cdots n_k=n}}\prod_{\ell=1}^k \Lambda(n_{\ell})\leq \frac{2^k}{n^{\sigma_0}}\left(\sum_{m|n}\Lambda (m)\right)^k\leq \frac{(2\log n)^k}{n^{\sigma_0}}.
$$
\end{proof}

\begin{lemma}\label{BoundLogZ}
Let $y\geq 2$ and $|t|\geq y+3$ be real numbers. Suppose that the rectangle $\{z: \sigma_0-\delta/2<\re(z)\leq 1, |\im(z)-t|\leq y+2\}$ is free of zeros of $\zeta(z)$. Then, for all complex numbers $s$ such that $\re(s)\geq \sigma_0-\delta/4$ and $|\im(s)|\leq y$ we have 
$$\log\zeta(s+it)\ll_{\sigma_0} \log|t|.$$
\end{lemma}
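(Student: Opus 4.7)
The plan is to apply the Borel--Carath\'eodory theorem, bounding $|\log\zeta|$ inside a disc by an upper bound for its real part $\log|\zeta|$ on a slightly larger concentric disc plus a single anchor value. Write $\tau := t + \im(s)$, so that $|\tau| \asymp |t|$ (using $|t|\geq y+3$ and $|\im(s)| \leq y$), and note $s + it = \re(s) + i\tau$. I would introduce the auxiliary function $g(w) := \log\zeta(w + i\tau)$, centered at $w_0 = 2$, with large radius $R := 2 - \sigma_0 + \delta/3$ and small radius $r := 2 - \sigma_0 + \delta/4$, so that $R - r = \delta/12$ is a positive constant depending only on $\sigma_0$.

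The first step is to verify that $g$ extends to a single-valued holomorphic function on the closed disc $\overline{D}(2, R)$. For $w$ with $\re(w) \geq 1$, non-vanishing of $\zeta(w + i\tau)$ is standard (Euler product for $\re>1$; non-vanishing on $\re=1$ by Hadamard, with $|\tau| \geq 3$ keeping us away from the pole at $1$). For $\re(w) \in [\sigma_0 - \delta/3, 1]$, the triangle inequality $|\im(w) + \im(s)| \leq R + y \leq y + 2$ places $w + i\tau$ inside the zero-free rectangle of the hypothesis, so $\zeta(w + i\tau) \neq 0$ there as well. Fixing the branch at $w = 2$ via the Euler product determines $g$ on the whole disc and yields the anchor bound $|g(2)| = |\log\zeta(2 + i\tau)| \ll 1$.

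Next I would bound $\re g(w) = \log|\zeta(w + i\tau)|$ on the outer circle. The standard convexity estimate $|\zeta(\sigma + i\tau)| \ll |\tau|^{C(\sigma_0)}$ for $\sigma \in [\sigma_0 - \delta/3, 1]$, together with the classical $|\zeta(\sigma + i\tau)| \ll \log|\tau|$ for $\sigma \geq 1$, gives $\re g(w) \ll_{\sigma_0} \log|t|$ uniformly on $|w - 2| = R$. Since $|\re(s) - 2| \leq 2 - (\sigma_0 - \delta/4) = r$, the target point $w_* := \re(s)$ lies in $\overline{D}(2, r)$, and Borel--Carath\'eodory produces
\[
|\log\zeta(s+it)| = |g(w_*)| \leq \frac{2r}{R - r}\max_{|w-2|=R}\re g(w) + \frac{R + r}{R - r}|g(2)| \ll_{\sigma_0} \log|t|,
\]
as claimed.

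The only genuinely delicate point is the geometric bookkeeping in choosing $R$ and $r$: the larger disc must accommodate every allowed $\re(s) \in [\sigma_0 - \delta/4, 1]$ in its smaller companion, yet its $i\tau$-translate must fit inside the zero-free rectangle of vertical half-width $y+2$ while $|\im(s)|$ may be as large as $y$. Once $R - r$ is pinned down as a positive quantity depending only on $\sigma_0$, the Borel--Carath\'eodory constants are controlled and the remainder of the argument is routine.
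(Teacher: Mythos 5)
Your proof is correct, and it is in substance the same argument as the paper's: the authors simply cite Theorem 9.6(B) of Titchmarsh, whose proof is exactly the Borel--Carath\'eodory argument you carry out (anchor at $\re(w)=2$, real part controlled by convexity bounds, analyticity of $\log\zeta$ guaranteed by the hypothesised zero-free rectangle). Your geometric bookkeeping checks out ($R<2$ keeps the translated disc inside the rectangle, and $R-r=\delta/12$ depends only on $\sigma_0$); the only unaddressed case, $\re(s)>2+r$, is trivial since there $\log\zeta(s+it)\ll 1$ directly from the Euler product.
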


\begin{proof}
This follows from Theorem 9.6 B of Titchmarsh. 
\end{proof}

\begin{lemma}\label{ExceptionalSet}
Let  $s_1, \dots, s_k$ be as above. Then, there exists a set $\mathcal{E}(T)\subset [T, 2T]$ with measure $\text{meas}(\mathcal{E}(T))\ll T^{1-\delta/8}$, and such that for all $t\in [T, 2T]\setminus \mathcal{E}(T)$ we have $\zeta(s_j+it+z)\neq 0$ for every $1\leq j\leq k$ and every $z$ in the rectangle $\{z: -\delta/2<\re(z)\leq 1, |\im(z)|\leq 3T^{\delta/4}\}$.
\end{lemma}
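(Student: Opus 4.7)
The plan is to count bad $t \in [T,2T]$ via a union bound over zeros. Fix $j$ and note that $\re(s_j+it+z)$ ranges in $(\re(s_j)-\delta/2, \re(s_j)+1] \subset (1/2+\delta/2, 2)$ for $z$ in the rectangle, so only nontrivial zeros $\rho = \beta + i\gamma$ with $\beta > 1/2+\delta/2$ are relevant; trivial zeros and zeros on (or near) the critical line do not intersect the shifted rectangle. The imaginary parts of $s_j+it+z$ lie in an interval of length $6T^{\delta/4}$ centered at $t+\im(s_j)$, and since $t \in [T,2T]$ and $|\im(s_j)| \leq T^{\delta/4}$, only zeros with $|\gamma| \leq 3T$ (say) can matter.

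For a single $j$ and a single zero $\rho = \beta + i\gamma$ with $\beta > 1/2+\delta/2$, the set of $t \in [T,2T]$ for which $\rho = s_j + it + z$ with $z$ in the rectangle is contained in the interval $[\gamma - \im(s_j) - 3T^{\delta/4}, \gamma - \im(s_j) + 3T^{\delta/4}]$, hence has measure at most $6T^{\delta/4}$. Summing over zeros counted by $N(1/2+\delta/2, 3T) \ll T^{1-\delta/2}(\log T)^5$ via the classical zero-density estimate (as cited from Titchmarsh in the proof of Lemma \ref{UnifShortDirichlet}), the contribution from a single $j$ is $\ll T^{1-\delta/4}(\log T)^5$.

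Finally, defining $\mathcal{E}(T)$ as the union of these bad sets over $1 \leq j \leq k$, the union bound combined with the constraint $k \leq a(\sigma_0)\log T/\log\log T$ gives
\[
\tmop{meas}(\mathcal{E}(T)) \ll k \cdot T^{1-\delta/4}(\log T)^5 \ll T^{1-\delta/8},
\]
after absorbing the logarithmic factors into the $T^{\delta/8}$ saving. For $t \in [T,2T]\setminus \mathcal{E}(T)$, no zero of $\zeta$ lies in any of the translated rectangles $s_j + it + R$, which is exactly the claim of the lemma.

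There is essentially no obstacle beyond bookkeeping: the argument is a direct union bound, and the only choices are the thresholds in the zero-density estimate (matched to $\beta > 1/2 + \delta/2$, giving exponent $1-\delta/2$) and the length-scale $T^{\delta/4}$ of the imaginary window, both of which are already dictated by the statement of the lemma. The factor $T^{\delta/4}$ from the imaginary window combines with $T^{1-\delta/2}$ from zero-counting and the factor $k$ from the union bound to give the final saving $T^{-\delta/8}$, with plenty of room to absorb the $(\log T)^5$ and the polylogarithmic size of $k$.
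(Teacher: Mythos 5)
Your proof is correct and follows essentially the same route as the paper: a union bound over $j\le k$, with each bad set $\mathcal{E}_j(T)$ bounded by letting each zero $\rho=\beta+i\gamma$ with $\beta>\tfrac12+\delta/2$ exclude an interval of $t$ of length $O(T^{\delta/4})$ and counting such zeros via the classical density estimate $N(\sigma,T)\ll T^{3/2-\sigma}(\log T)^5$, giving $\tmop{meas}(\mathcal{E}_j(T))\ll T^{1-\delta/4}(\log T)^5$ and hence $\tmop{meas}(\mathcal{E}(T))\ll k\,T^{1-\delta/4}(\log T)^5\ll T^{1-\delta/8}$. Your write-up just makes the bookkeeping (relevant range of $\beta$ and $\gamma$, length of the excluded $t$-interval) more explicit than the paper does.
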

\begin{proof}
For every $1\leq j\leq k$, let $\mathcal{E}_j(T)$ be the set of $t\in [T, 2T]$ such that the rectangle $\{z: -\delta/2<\re(z)\leq 1, |\im(z)|\leq 3T^{\delta/4}\}$ has a zero of $\zeta(s_j+it+z)$. Then, by the classical zero density estimate $N(\sigma, T)\ll T^{3/2-\sigma}(\log T)^5$, we deduce that 
$$ \textup{meas}(\mathcal{E}_j(T))\ll T^{\delta/4} T^{3/2-\sigma_0+\delta/2}(\log T)^5 < T^{1-\delta/4}(\log T)^5.$$
We take $\mathcal{E}(T)=\cup_{j=1}^k \mathcal{E}_j(T)$. Then $\mathcal{E}(T)$ satisfies the assumptions of the lemma, since $\textup{meas}(\mathcal{E}(T))\ll T^{1-\delta/4}(\log T)^6\ll T^{1-\delta/8}$.
\end{proof}

We are now ready to prove
Proposition \ref{Approx}. 

\begin{proof}[Proof of Proposition \ref{Approx}]
Let $x=\lfloor T^{\delta/8}\rfloor +1/2$. Let $c=1-\sigma_0+ 1/\log T$, and $Y=T^{\delta/4}$. Then by Perron's formula, we have for $t\in [T, 2T]\setminus \mathcal{E}(T)$
$$ \frac{1}{2\pi i}\int_{c-iY}^{c+iY}  \left(\prod_{j=1}^k\log\zeta(s_j+it+z)\right)\frac{x^z}{z}dz= 
\sum_{n\leq x} \frac{F_{{\mathbf s}}(n)}{n^{it}}+ O\left(\frac{x^c}{Y}\sum_{n=1}^{\infty} \frac{|F_{{\mathbf s}}(n)|}{n^{c}|\log(x/n)|}\right).$$
To bound the error term of this last estimate, we split the sum into three parts: $n\leq x/2$, $x/2<n<2x$ and $n\geq 2x$. The terms in the first and third parts satisfy $|\log(x/n)|\geq \log 2$, and hence their contribution is 
$$\ll \frac{x^{1-\sigma_0}}{Y} \sum_{n=1}^{\infty}\frac{|F_{{\mathbf s}}(n)|}{n^{c}}\leq \frac{x^{1-\sigma_0}}{Y} \left(\sum_{n=1}^{\infty}\frac{\Lambda(n)}{n^{\sigma_0+c}\log n}\right)^k\leq \frac{x^{1-\sigma_0}(2\log T)^k}{Y}\ll T^{-b(\sigma_0)},$$
or some positive constant $b(\sigma_0)$, if $a(\sigma_0)$ is sufficiently small.
To handle the contribution of the terms $x/2<n<2x$, we put $r=x-n$, and use that $|\log(x/n)|\gg |r|/x$. Then by Lemma \ref{DirichletC} we deduce that the contribution of these terms is 
$$\ll \frac{x^{1-\sigma_0}(3\log x)^k}{Y}\sum_{r\leq x}\frac{1}{r}\ll \frac{x^{1-\sigma_0}(3\log x)^{k+1}}{Y}\ll T^{-b(\sigma_0)}.$$
We now move the contour to the line $\re(s)=-\delta/4$. By Lemma \ref{ExceptionalSet}, we do not encounter any zeros of $\zeta(s_j+it+z)$ since $t\in [T, 2T]\setminus \mathcal{E}(T)$.  We pick up a simple pole at $z=0$ which leaves a residue $\prod_{j=1}^k\log\zeta(s_j+it)$. 
Also  Lemma \ref{BoundLogZ} implies that for any $z$ on our contour we have
$$|\log\zeta(s_j+it+z)|\leq c(\sigma_0) \log T,$$
for all $j$ where $c(\sigma_0)$ is a positive constant. Therefore, we deduce that 
$$ \frac{1}{2\pi i}\int_{c-iY}^{c+iY} \left(\prod_{j=1}^k\log\zeta(s_j+it+z)\right)\frac{x^z}{z}dz= \prod_{j=1}^k\log\zeta(s_j+it) + E_1,$$
where 
\begin{align*}
 E_1&=\frac{1}{2\pi i} \left(\int_{c-iY}^{-\delta/4-iY}+ \int_{-\delta/4-iY}^{-\delta/4+iY}+ \int_{-\delta/4+iY}^{c+iY}\right) \left(\prod_{j=1}^k\log\zeta(s_j+it+z)\right)\frac{x^z}{z}dz\\
&\ll \frac{x^{1-\sigma_0}(c(\sigma_0)\log T)^k}{Y}+ x^{-\delta/4}(c(\sigma_0)\log T)^k\log Y\ll T^{-b(\sigma_0)},
\end{align*}
as desired.

\end{proof}

\subsection{An Asymptotic formula for the moment of products of shifts of $\log\zeta(s)$}

\begin{proof}[Proof of Theorem \ref{MomentsShifts}]
Let $\mathcal{E}_1(T)$ and $\mathcal{E}_2(T)$ be the corresponding exceptional sets for ${\mathbf s}$ and ${\mathbf r}$ respectively as in Lemma \ref{ExceptionalSet}, and let $\mathcal{E}(T)= \mathcal{E}_1(T)\cup \mathcal{E}_2(T)$. First, note that if $t\in [T,2T]\setminus \mathcal{E}(T)$ then by Proposition \ref{Approx} and Lemma \ref{BoundLogZ} we have 
$$ \left|\sum_{n\leq x} \frac{F_{\mathbf s}(n)}{n^{it}}\right|\ll (c(\sigma_0)\log T)^{k}, \text{ and } \left|\sum_{m\leq x} \frac{F_{\mathbf r}(m)}{m^{-it}}\right|\ll (c(\sigma_0))\log T)^{\ell},$$
for some positive constant $c(\sigma_0)$. Let $x=T^{(\sigma_0-1/2)/8}$. 
Then, it follows from Proposition \ref{Approx} that
\begin{equation}\label{MomentsProduct}
\begin{aligned}
&\frac{1}{T} \int_{[T,2T]\setminus \mathcal{E}(T)}\left(\prod_{j=1}^k\log\zeta(s_j+it)\right)\left(\prod_{j=1}^{\ell}\log\zeta(r_j-it)\right)dt\\
&= \frac{1}{T} \int_{[T,2T]\setminus \mathcal{E}(T)} \left(\sum_{n\leq x} \frac{F_{\mathbf s}(n)}{n^{it}}\right)\left(\sum_{m\leq x} F_{\mathbf r}(m)m^{it}dt\right)  dt + O\left(T^{-b(\sigma_0)}(\log T)^{\max(k, \ell)}\right)\\
&= \frac{1}{T} \int_T^{2T} \left(\sum_{n\leq x} \frac{F_{\mathbf s}(n)}{n^{it}}\right)\left(\sum_{m\leq x} F_{\mathbf r}(m)m^{it}\right)dt + O\left(T^{-b(\sigma_0)/2}\right).
\end{aligned}
\end{equation}

Furthermore, we have 

\begin{equation} \label{eq:mvt}
\frac{1}{T} \int_T^{2T} \left(\sum_{n\leq x} \frac{F_{\mathbf s}(n)}{n^{it}}\right)\left(\sum_{m\leq x} F_{\mathbf r}(n)m^{it}\right)dt= \sum_{m,n\leq x} F_{\mathbf s}(n)F_{\mathbf r}(m)\frac{1}{T} \int_T^{2T}\left(\frac{m}{n}\right)^{it}dt.
\end{equation}
The contribution of the diagonal terms $n=m$ equals $\sum_{n\leq x} F_{\mathbf s}(n)F_{\mathbf r}(n)$. On the other hand, by Lemma \ref{DirichletC} the contribution of  the off-diagonal terms $n\neq m$ is 
\begin{equation} \label{eq:offdiag}
\ll \frac{1}{T}\sum_{\substack{m,n\leq x \\ m \neq n}} \frac{(2\log n)^k (2\log m)^{\ell}}{(mn)^{\sigma_0}}\frac{1}{|\log(m/n)|}\ll \frac{x^{3-2\sigma_0}(2\log x)^{k+\ell}}{T}\ll T^{-1/2},
\end{equation}
since $|\log(m/n)|\gg 1/x$.

Furthermore, it follows from \eqref{orthogonality} that 
\begin{equation} \label{eq:randmvt}
\ex\left(\prod_{j=1}^k\log\zeta(s_j,X)\right)\left(\prod_{j=1}^{\ell}\log\overline{\zeta(r_j, X)}\right)= \sum_{n=1}^{\infty} F_{\mathbf s}(n)F_{\mathbf{r}}(n)= \sum_{n\leq x} F_{\mathbf s}(n)F_{\mathbf{r}}(n)+E_2,
\end{equation}
where 
$$E_2\leq \sum_{n>x}\frac{(2\log n)^{k+\ell}}{n^{2\sigma_0}}.$$
Since the function $(\log t)^{\beta}/t^{\alpha}$ is decreasing for $t>\exp(\beta/\alpha)$, then with the choice $\alpha=(2\sigma_0-1)/2$ we obtain
$$ E_2\leq \frac{(2\log x)^{k+\ell}}{x^{\alpha}}\sum_{n>x}\frac{1}{n^{1+\alpha}}\ll \frac{(2\log x)^{k+\ell}}{x^{2\alpha}}\ll x^{-\alpha}.$$
Combining this with \eqref{eq:mvt}, \eqref{eq:offdiag}, and \eqref{eq:randmvt} completes the proof.

\end{proof}

\section{The characteristic function of joint shifts of $\log \zeta(s)$} \label{sec:characteristic}

\begin{proof}[Proof of Theorem \ref{characteristic}]

Let $\mathcal{E}(T)$ be as in Theorem \ref{MomentsShifts}. Let $N=[\log T/(C(\log\log T))]$ where $C$ is a suitably large constant.  Then,  
$\Phi_T(\mathbf{u}, \mathbf{v})$ equals
\begin{align}\label{Taylor}
& \nonumber \frac1T \int_{[T, 2T]\setminus \mathcal{E}(T)} \exp\left(i\left(\sum_{j=1}^J (u_j \re \log\zeta(s_j+it)+ v_j \im \log\zeta (s_j+it))\right)\right)dt +O\left(T^{-c_3}\right)\\
& =\sum_{n=0}^{2N-1} \frac{i^n}{n!} \cdot \frac1T \int_{[T, 2T]\setminus \mathcal{E}(T)}\left(\sum_{j=1}^J (u_j \re \log\zeta(s_j+it)+ v_j \im \log\zeta (s_j+it))\right)^ndt + E_3,
\end{align}
where 
$$E_3 \ll T^{-c_3}+ \frac{1}{(2N)!}\left(\frac{2c_1(\log T)^{\sigma}}{J}\right)^{2N}\frac1T \int_{[T, 2T]\setminus \mathcal{E}(T)}  \left(\sum_{j=1}^J|\log\zeta(s_j+it)|\right)^{2N}dt.$$
Now, by Theorem \ref{MomentsShifts} along with Lemma \ref{lem:momentbd}, we obtain that for all $1\leq j\leq J$
\begin{equation}\label{BoundM}
 \frac1T \int_{[T, 2T]\setminus \mathcal{E}(T)}  |\log\zeta(s_j+it)|^{2N}dt \ll 
\ex\left(|\log\zeta(s_j, X)|^{2N}\right)\leq \left(\frac{c_8(\sigma) N^{1-\sigma}}{(\log N)^{\sigma}}\right)^{2N},
\end{equation}
for some positive constant $c_8=c_8(\sigma).$ Furthermore, by Minkowski's inequality we have
$$ \frac1T \int_{[T, 2T]\setminus \mathcal{E}(T)}  \left(\sum_{j=1}^J|\log\zeta(s_j+it)|\right)^{2N}dt \leq \left(c_8 J \frac{N^{1-\sigma}}{(\log N)^{\sigma}}\right)^{2N}.$$
Therefore, we deduce that for some positive constant $c_{9}=c_{9}(\sigma)$, we have
$$ E_3\ll T^{-c_3} + \left(c_{9}\frac{(\log T)^{\sigma}}{(N\log N)^{\sigma}}\right)^{2N}\ll e^{-N}.$$

Next, we handle the main term of \eqref{Taylor}. Let $\tilde{u_j}=(u_j+iv_j)/2$ and $\tilde{v_j}=(u_j-iv_j)/2$. Then by Theorem \ref{MomentsShifts} we obtain
\begin{align*}
&\frac1T \int_{[T, 2T]\setminus \mathcal{E}(T)}\left(\sum_{j=1}^J (u_j \re \log\zeta(s_j+it)+ v_j \im \log\zeta(s_j+it))\right)^ndt\\
&\frac1T \int_{[T, 2T]\setminus \mathcal{E}(T)}\left(\sum_{j=1}^J (\tilde{u_j} \log\zeta(s_j+it)+ \tilde{v_j}  \log\zeta(s_j-it))\right)^ndt\\
&=\sum_{\substack{k_1,\dots, k_{2J}\geq 0\\ k_1+\cdots+k_{2J}=n}}{n\choose k_1, k_2, \dots, k_{2J}}\prod_{j=1}^J\tilde{u_j}^{k_j} \prod_{\ell=1}^J\tilde{v_{\ell}}^{k_{J+\ell}}\\
& \quad \quad \times \frac1T \int_{[T, 2T]\setminus \mathcal{E}(T)} \prod_{j=1}^J(\log\zeta(s_j+it))^{k_j}\prod_{\ell=1}^J  (\log\zeta(s_\ell-it))^{k_{J+\ell}}dt\\
&= \sum_{\substack{k_1,\dots, k_{2J}\geq 0\\ k_1+\cdots+k_{2J}=n}}{n\choose k_1, k_2, \dots, k_{2J}}\prod_{j=1}^J\tilde{u_j}^{k_j} \prod_{\ell=1}^J\tilde{v_{\ell}}^{k_{J+\ell}}\\
& \quad \quad \times \ex\left(\prod_{j=1}^J(\log\zeta(s_j, X))^{k_j}\prod_{\ell=1}^J  (\log\zeta(s_\ell, X))^{k_{J+\ell}}\right) +O\Big(T^{-c_5} \big(2c_1(\log T)^{\sigma}\big)^n\Big),\\
&= \ex\left(\left(\sum_{j=1}^J (u_j \re \log\zeta(s_j, X)+ v_j \im \log\zeta(s_j, X))\right)^n\right) +O\Big(T^{-c_5} \big(2c_1(\log T)^{\sigma}\big)^n\Big).
\end{align*}

Inserting this estimate in \eqref{Taylor}, we derive
\begin{align*}
\Phi_T(\mathbf{u}, \mathbf{v})&=\sum_{n=0}^{2N-1} \frac{i^n}{n!}\ex\left(\left(\sum_{j=1}^J (u_j \re \log\zeta(s_j, X)+ v_j \im \log\zeta(s_j, X))\right)^n\right) + O\Big(e^{-N}\Big)\\ &= \Phi_{\text{rand}}(\mathbf{u}, \mathbf{v}) +E_4.
\end{align*}
where 
$$ E_4\ll e^{-N} + \frac{1}{(2N)!}\left(\frac{2c_1(\log T)^{\sigma}}{J}\right)^{2N}\ex\left(\left(\sum_{j=1}^J | \log\zeta(s_j, X)|\right)^{2N}\right)\ll e^{-N}
$$
by \eqref{BoundM} and Minkowski's inequality. This completes the proof.
\end{proof}

\section{Discrepancy estimates for the distribution of shifts} \label{sec:distribution}

The deduction of Theorem \ref{discrep} from Theorem \ref{characteristic} uses Beurling-Selberg functions.
For $z\in \mathbb C$ let
\[
H(z) =\bigg( \frac{\sin \pi z}{\pi} \bigg)^2 \bigg( \sum_{n=-\infty}^{\infty} \frac{\tmop{sgn}(n)}{(z-n)^2}+\frac{2}{z}\bigg)
\qquad\mbox{and} \qquad K(z)=\Big(\frac{\sin \pi z}{\pi z}\Big)^2.
\]
Beurling proved that the function $B^+(x)=H(x)+K(x)$
majorizes $\tmop{sgn}(x)$ and its Fourier transform
has restricted support in $(-1,1)$. Similarly, the function $B^-(x)=H(x)-K(x)$ minorizes $\tmop{sgn}(x)$ and its Fourier
transform has the same property (see Vaaler \cite{Vaaler} Lemma 5).

Let $\Delta>0$ and $a,b$ be real numbers with $a<b$. Take $\mathcal I=[a,b]$
and
define
\[
F_{\mathcal I} (z)=\frac12 \Big(B^-(\Delta(z-a))+B^-(\Delta(b-z))\Big).
\]
The function $F_{\mathcal I}$  has the following remarkable properties. 
First, it follows from the inequality  $B^-(x) \le \tmop{sgn}(x) \le B^+(x)$ that
\begin{equation} \label{l1 bd}
0 \le \mathbf 1_{\mathcal I}(x)- F_{\mathcal I}(x)\le K(\Delta(x-a))+K(\Delta(b-x)).
\end{equation}
Additionally, one has
\begin{equation} \label{Fourier}
\widehat F_{\mathcal I}(\xi)=
\begin{cases}\widehat{ \mathbf 1}_{\mathcal I}(\xi)+O\Big(\frac{1}{\Delta} \Big) \mbox{ if } |\xi| < \Delta, \\
0 \mbox{ if } |\xi|\ge \Delta.
\end{cases}
\end{equation}
The first estimate above follows from \eqref{l1 bd} and
the second follows from the fact that the Fourier transform
of $B^-$ is supported in $(-1,1)$.
Before proving Theorem \ref{discrep} we first require the following lemmas.
\begin{lemma} \label{lem:functionbd}
For $x \in \mathbb R$ we have
$
|F_{\mathcal I}(x)| \le 1.
$
\end{lemma}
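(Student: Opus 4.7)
The plan is to establish the two one-sided bounds $-1\le F_{\mathcal I}(x)\le 1$ separately. The upper bound is immediate from the inequality \eqref{l1 bd} displayed above, which gives $F_{\mathcal I}(x)\le \mathbf{1}_{\mathcal I}(x)\le 1$. The substance of the lemma is therefore the lower bound $F_{\mathcal I}(x)\ge -1$.

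For the lower bound, I would set $u=\Delta(x-a)$ and $v=\Delta(b-x)$, so that $u+v=\Delta(b-a)\ge 0$, and then split into cases. When $x\in\mathcal I$, so that both $u,v\ge 0$, the inequality \eqref{l1 bd} combined with the elementary estimate $0\le K(y)\le K(0)=1$ immediately yields
\[
F_{\mathcal I}(x)\ge \mathbf{1}_{\mathcal I}(x)-K(u)-K(v)\ge 1-2=-1.
\]
When $x\notin\mathcal I$, so that exactly one of $u, v$ is negative, I would use the identity $B^-(-y)=-B^+(y)$ (which follows from the oddness of $H$ and the evenness of $K$) to rewrite
\[
F_{\mathcal I}(x)=-\tfrac12\bigl(B^+(-u)+B^+(-v)\bigr),
\]
and then apply the Beurling majorant inequality $B^+(y)\ge\mathrm{sgn}(y)$ together with the sharp Vaaler bound $B^+(y)-\mathrm{sgn}(y)\le 2K(y)$ to control the right-hand side.

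The main obstacle is precisely this second case: the bound from \eqref{l1 bd} alone only yields $F_{\mathcal I}(x)\ge -K(u)-K(v)\ge -2$, and sharpening it to $\ge -1$ requires exploiting the cancellation between $B^+(-u)$ and $B^+(-v)$ coming from the constraint $u+v\ge 0$. Concretely, one expects to invoke the explicit Vaaler representations of $B^\pm$, or a direct computation showing that the two nonnegative excesses $B^+(-u)-\mathrm{sgn}(-u)$ and $B^+(-v)-\mathrm{sgn}(-v)$ cannot simultaneously be large when $uv\le 0$ and $u+v\ge 0$, since the argument close to zero (where $K$ is near $1$) is necessarily paired with an argument whose $K$-value is controlled.
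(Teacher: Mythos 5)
Your upper bound is fine, and your treatment of the interior case is correct and in fact more direct than the paper's: for $a\le x\le b$, \eqref{l1 bd} together with $0\le K\le K(0)=1$ gives $F_{\mathcal I}(x)\ge 1-K(\Delta(x-a))-K(\Delta(b-x))\ge -1$ at once, whereas the paper routes even this case through the identity $H(y)=1-K(y)G(y)$.

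The exterior case, however, is a genuine gap: you have correctly isolated the difficulty but not resolved it. Writing (say for $x<a$) $s=\Delta(a-x)>0$ and $t=\Delta(b-x)>s$, your reduction leaves you needing $B^+(s)+B^+(-t)=B^+(s)-B^-(t)\le 2$, and the only quantitative tools you invoke are $B^-\le \tmop{sgn}\le B^+$ and $B^+-\tmop{sgn}\le 2K$, which yield only $\le 4$ (equivalently $F_{\mathcal I}(x)\ge -2$, as you note). The heuristic you offer to close the gap --- that an argument near zero is ``necessarily paired with an argument whose $K$-value is controlled'' --- is not the actual mechanism and fails in the critical regime: if $\Delta(b-a)$ is small and $x$ is just outside $\mathcal I$, then $s$ and $t$ are \emph{both} near $0$, both $K$-values are near $1$, and indeed $F_{\mathcal I}(x)\to -1$ there, so the inequality is sharp with no slack coming from the $K$-factors. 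What actually closes the case is finer information about $H$ itself, namely Vaaler's identity $H(y)=1-K(y)G(y)$ for $y\ge 0$ with $0\le G\le 1$, plus the monotonicity of $G$ near the origin. From it one gets $B^+(s)-B^-(t)=K(s)\,(1-G(s))+K(t)\,(1+G(t))$, which is $\le 2$ by splitting on whether $K(t)\le 1/2$: if so, the two terms are each $\le 1$; if not, then $0<s<t<1$, and using $K\le 1$ together with $G(s)\ge G(t)$ the sum is at most $(1-G(t))+(1+G(t))=2$. This computation (or something equivalent exploiting the explicit structure of $H$) is exactly the missing content; as written, your argument proves only $|F_{\mathcal I}(x)|\le 2$ off the interval.
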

\begin{proof}
It suffices to prove the lemma for $\Delta=1$. Also, note that we only need to show that $F_{\mathcal I}(x) \ge -1$.
From the identity
\[
\sum_{ n=-\infty}^{\infty} \frac{1}{(n-z)^2}=
\left(\frac{\pi}{\sin \pi z}\right)^2
\]
it follows that for $y \ge 0$
\begin{equation}\label{eq:Hid}
H(y)=1-K(y)G(y)
\end{equation}
where
\[
G(y)=2y^2 \sum_{m=0}^{\infty} \frac{1}{(y+m)^2}-2y-1.
\]
In Lemma 5 of \cite{Vaaler}, Vaaler shows for $y \ge 0$ that
\begin{equation} \label{eq:Gbd}
0 \le G(y) \le 1.
\end{equation}
Also, note that for each $m \ge 1$, and $0<y \le 1$ one has $\frac{m}{(y+m)^3} \le \int_{m-1}^m \frac{t}{(y+t)^3} \, dt$ so that for $0<y \le 1$ 
\begin{equation} \label{eq:Gdec}
G'(y)=4y \sum_{m \ge 1} \frac{m}{(y+m)^3}-2 \le 4y \int_0^{\infty} \frac{t}{(y+t)^3} \, dt-2 = 0.
\end{equation}
 

First consider the case $a\le x \le b$. By \eqref{eq:Hid} we get that in this range
\[
F_{\mathcal I}(x)=\frac12 \left(2- K(x-a)(G(x-a)+1)-K(b-x)(G(b-x)+1) \right),
\]
which along with \eqref{eq:Gbd} implies $F_{\mathcal I}(x) \ge -1$ for $a \le x \le b$. 
Now consider the case $x<a$. Since $H$ is an odd function
\eqref{eq:Hid} and \eqref{eq:Gbd} imply 
\[
\begin{split}
F_{\mathcal I}(x)=& \frac12 \left(K(x-a)(G(a-x)-1)-K(b-x)(G(b-x)+1) \right) \\
\ge & \frac12\left( -K(x-a)-2K(x-b)\right),
\end{split}
\]
which is $\ge -1$ if $K(x-b) \le 1/2$. If $K(x-b) \ge 1/2$
we also have $K(x-a)>K(x-b)$ and $0<b-x < 1$.
By this and \eqref{eq:Gdec} we have in this range as well that
\[
F_{\mathcal I}(x) \ge \frac12 \left( K(x-b)(G(a-x)-G(b-x)-2)\right) \ge -1.
\]
Hence, $F_{\mathcal I}(x)\ge -1$ for $x<a$.
The remaining case when $x>b$ follows from a similar argument.
\end{proof}

\begin{lemma} \label{upper bd}
Fix $1/2<\sigma<1$, and let $s$ be a complex number such that $\tmop{Re}(s)=\sigma$ and $|\tmop{Im}(s)| \le T^{\frac14\cdot(\sigma-\frac12)}$. Then there exists a positive constant $c_1(\sigma)$ such that for $|u| \le c_1(\sigma)(\log T)^{\sigma}$ we have
\[
\Phi_T(u,0) \ll \exp\left( \frac{-u}{5 \log u} \right) \quad
\mbox{ and } \quad \Phi_T(0,u) \ll \exp\left( \frac{-u}{5 \log u} \right).
\]
\end{lemma}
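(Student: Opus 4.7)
The plan is a two-step reduction: first pass from $\Phi_T$ to the random model via Theorem \ref{characteristic}, then factorise $\Phi_{\textup{rand}}$ by independence of the $X(p)$ and bound each factor directly.

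Applying Theorem \ref{characteristic} with $J=1$, $s_1=s$, $u_1=u$, $v_1=0$ (so that $|u_1|\le c_1(\sigma)(\log T)^\sigma$ is precisely the hypothesis of the present lemma) yields
\[
\Phi_T(u,0)=\Phi_{\textup{rand}}(u,0)+O\!\bigl(\exp(-c_2(\sigma)\log T/\log\log T)\bigr).
\]
Since $\sigma<1$ and $u\le c_1(\sigma)(\log T)^\sigma$, one has $u/(5\log u)\ll (\log T)^\sigma/\log\log T$, which for $T$ large is much smaller than $c_2(\sigma)\log T/\log\log T$; hence the error is majorised by $\exp(-u/(5\log u))$. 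It therefore suffices to prove $|\Phi_{\textup{rand}}(u,0)|\ll \exp(-u/(5\log u))$, and we may assume $u\ge 1$ since the estimate is trivial for smaller $u$ (the right-hand side exceeds $1$). The bound for $\Phi_T(0,u)$ is entirely analogous, replacing cosines by sines throughout.

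Writing $X(p)=e^{i\theta_p}$ with $\theta_p$ independent uniform on $[0,2\pi]$, and setting $\phi_p = \theta_p-\im(s)\log p$ (still uniform modulo $2\pi$), the expansion of $-\log(1-X(p)/p^s)=\sum_{k\ge 1}X(p)^k/(kp^{ks})$ gives, almost surely,
\[
\re\log\zeta(s,X)=\sum_p R_p(\phi_p), \qquad R_p(\phi):=\sum_{k\ge 1}\frac{\cos(k\phi)}{k\,p^{k\sigma}}.
\]
Independence of the $\phi_p$ then yields the factorisation
\[
\Phi_{\textup{rand}}(u,0)=\prod_p g_p(u), \qquad g_p(u):=\frac{1}{2\pi}\int_0^{2\pi}e^{iuR_p(\phi)}\,d\phi.
\]
Taylor-expanding $g_p(u)$ in $u$, using $\ex[R_p]=0$, $\ex[R_p^2]=\tfrac{1}{2p^{2\sigma}}+O(p^{-4\sigma})$, and $|R_p|\ll 1/p^\sigma$ uniformly,
\[
g_p(u)=1-\frac{u^2}{4p^{2\sigma}}+O\!\Bigl(\frac{u^3}{p^{3\sigma}}+\frac{u^2}{p^{4\sigma}}\Bigr).
\]
For a large enough absolute constant $C$, any prime $p\ge Cu^{1/\sigma}$ satisfies $u/p^\sigma\le 1/C$, the error is at most half the main term, and hence $|g_p(u)|\le 1-u^2/(8p^{2\sigma})$; for the remaining primes we use the trivial bound $|g_p|\le 1$. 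Summing the resulting $-\log$'s,
\[
-\log|\Phi_{\textup{rand}}(u,0)|\;\ge\; \frac{u^2}{8}\sum_{p\ge Cu^{1/\sigma}}\frac{1}{p^{2\sigma}}\;\gg_\sigma\; \frac{u^{1/\sigma}}{\log u}\;\ge\;\frac{u}{\log u},
\]
by partial summation with $\pi(x)\sim x/\log x$, where the last inequality uses $\sigma\le 1$ and $u\ge 1$. A suitable choice of $C$ promotes the resulting constant to $1/5$.

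The one delicate point will be uniformity in $\sigma$: the implicit $\gg_\sigma$ comes from $\sum_{p\ge X}p^{-2\sigma}\sim X^{1-2\sigma}/((2\sigma-1)\log X)$, whose constant deteriorates as $\sigma\downarrow 1/2$. Since $\sigma$ is fixed at the outset of the lemma, however, this dependence can be absorbed into $c_1(\sigma)$, and the qualitative rate $u/(5\log u)$ is unaffected.
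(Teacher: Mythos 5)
Your proposal is correct, and its skeleton is exactly the paper's: apply Theorem \ref{characteristic} with $J=1$ to replace $\Phi_T$ by $\Phi_{\textup{rand}}$ (checking that the error $\exp(-c_2\log T/\log\log T)$ is dominated by $\exp(-u/(5\log u))$ since $u\le c_1(\log T)^{\sigma}$ with $\sigma<1$), and then bound the random-model characteristic function. The only difference is at the second step: the paper simply cites (a straightforward modification of) Lemma 6.3 of \cite{LLR} for the bounds on $\ex(\exp(iu\,\re\log\zeta(s,X)))$ and $\ex(\exp(iu\,\im\log\zeta(s,X)))$, whereas you prove them from scratch by factoring $\Phi_{\textup{rand}}(u,0)=\prod_p g_p(u)$ over the independent $X(p)$ and Taylor-expanding each local factor; this is essentially the argument underlying the cited lemma, so your write-up buys self-containedness at the cost of length. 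Your computation is sound; the one sentence I would amend is the claim that ``a suitable choice of $C$ promotes the constant to $1/5$'': enlarging $C$ actually shrinks the tail sum $\sum_{p\ge Cu^{1/\sigma}}p^{-2\sigma}$ and hence worsens the constant in front of $u^{1/\sigma}/\log u$. What actually rescues the constant is that $1/\sigma>1$, so $u^{1/\sigma}/\log u$ exceeds $u/(5\log u)$ by a factor $\gg u^{1/\sigma-1}$, which beats any fixed constant once $u$ is large enough in terms of $\sigma$ and $C$; for bounded $u$ the asserted bound is trivial since $|\Phi_{\textup{rand}}|\le 1$ and the right-hand side is bounded below. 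With that adjustment the argument is complete.
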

\begin{proof}
By a straightforward modification of 
Lemma 6.3 of \cite{LLR} one has that
\[
\mathbb E \bigg( \exp\Big(i u  \tmop{Re} \log \zeta(s, X)\Big) \bigg) \ll \exp\bigg(-\frac{u}{5 \log u} \bigg),
\]
and 
\[
\mathbb E \bigg( \exp\Big(i u  \tmop{Im} \log \zeta(s, X)\Big) \bigg) \ll \exp\bigg(-\frac{u}{5 \log u} \bigg).
\]
Using the first bound and applying Theorem \ref{characteristic}  with $J=1$ establishes the first claim. The second claim follows similarly by using the second bound and Theorem \ref{characteristic}.
\end{proof}

\begin{proof}[Proof of Theorem \ref{discrep}]
First, we claim that it suffices to estimate
the discrepancy over $(\mathcal R_1, \ldots, \mathcal R_J)$ such 
that for each $j$ we have $\mathcal R_j \subset [-\sqrt{\log T}, \sqrt{\log T}] \times [-\sqrt{\log T}, \sqrt{\log T}]$.
To see this consider $( \widetilde{\mathcal R_1}, \ldots, \widetilde{\mathcal R_J})$ where 
$\widetilde{\mathcal R_j}=\mathcal R_j \cap [-\sqrt{\log T}, \sqrt{\log T}]  \times [-\sqrt{\log T}, \sqrt{\log T}] $.
It follows that
\begin{equation} \notag
\begin{split}
&\bigg|\mathbb P_T \bigg( \log \zeta(s_j+it) \in \mathcal R_j, \forall  j \le J \bigg)
-\mathbb P_T \bigg( \log \zeta(s_1+it) \in \widetilde{\mathcal R_1},\log \zeta(s_j+it) \in \mathcal R_j, 2 \le j \le J \bigg)
\bigg| \\
&\ll \mathbb P_T \bigg( |\log \zeta(s_1+it)| \ge \sqrt{\log T} \bigg) \ll \exp\Big(-\sqrt{\log T}\Big),
\end{split}
\end{equation}
where the last bound follows from Theorem 1.1 and Remark 1 of
of \cite{La}.
Repeating this argument gives
\[
\bigg|\mathbb P_T \bigg( \log \zeta(s_j+it) \in \widetilde{\mathcal R_j}, \forall  j \le J )
-\mathbb P_T \bigg( \log \zeta(s_j+it) \in \mathcal R_j,  \forall  j \le J \bigg)
\bigg| \ll J \exp\Big(-\sqrt{\log T}\Big).
\]
Similarly,
\[
\bigg|\mathbb P \bigg( \log \zeta(s_j,X) \in \widetilde{\mathcal R_j}, \forall  j \le J )
-\mathbb P \bigg( \log \zeta(s_j,X) \in \mathcal R_j,  \forall  j \le J \bigg)
\bigg| \ll J \exp\Big(-\sqrt{\log T}\Big).
\]
Hence, the error from restricting to $( \widetilde{\mathcal R_1}, \ldots, \widetilde{\mathcal R_J})$ is negligible and establishes
the claim.

Let $\Delta=c_1(\sigma) (\log T)^{\sigma}/J$ and $\mathcal R_j=[a_j,b_j]\times[c_j, d_j]$ for $j=1, \ldots, J$, with
$|b_j-a_j|,|d_j-c_j| \le 2\sqrt{\log T}$.
Also, write $\mathcal I_j=[a_j,b_j]$ and $ \mathcal J_j=[c_j,d_j]$.
By Fourier inversion, \eqref{Fourier}, and Theorem \ref{characteristic} we have that
\begin{equation} \label{long est}
\begin{split}
&\frac1T \int_T^{2T} \prod_{j=1}^J F_{\mathcal I_j} \Big( \tmop{Re} \log \zeta(s_j+it)\Big) 
F_{\mathcal J_j}\Big( \tmop{Im} \log \zeta(s_j+it)\Big) \, dt\\
&
=\int_{\mathbb R^{2J}} \bigg(\prod_{j=1}^J \widehat{F}_{\mathcal I_j} (u_j) 
\widehat{F}_{\mathcal J_j}( v_j)\bigg)  \Phi_T(\mathbf u, \mathbf v) \, d\mathbf u \, d\mathbf v \\
&
= \int\limits_{\substack{|u_j|,|v_j| \le \Delta \\ j=1,2, \ldots, J}} \bigg(\prod_{j=1}^J \widehat{F}_{\mathcal I_j} (u_j) 
\widehat{F}_{\mathcal J_j}( v_j)\bigg)  \Phi_{\tmop{rand}}(\mathbf u, \mathbf v) \, d\mathbf u \, d\mathbf v +O\left(\left(2\Delta\sqrt{\log T}\right)^{2J} \exp\Big(-
\frac{c_2 \log T}{\log \log T}\Big)\right)\\
& 
=\mathbb E \bigg( \prod_{j=1}^J F_{\mathcal I_j} \Big( \tmop{Re} \log \zeta(s_j,X)\Big) 
F_{\mathcal J_j}\Big( \tmop{Im} \log \zeta(s_j,X)\Big) \bigg)+O\left( \exp\left(-
\frac{c_2 \log T}{2\log \log T}\right)\right).
\end{split}
\end{equation}

Next note that $\widehat K(\xi)=\max(0,1-|\xi|)$. Applying Fourier inversion, Theorem \ref{characteristic} with $J=1$,
and Lemma \ref{upper bd} we have  that
\begin{equation} \notag
\begin{split}
\frac1T \int_T^{2T} K\Big( \Delta \cdot \Big(\tmop{Re} \log \zeta(s+it)-\alpha\Big)\Big) \, dt
=&\frac{1}{\Delta}\int_{-\Delta}^{\Delta}\Big(1-\frac{|\xi|}{\Delta}\Big) e^{-2\pi i \alpha \xi} \Phi_T(\xi,0) \, d\xi 
\ll  \frac{1}{\Delta},
\end{split}
\end{equation}
where $\alpha$ is an arbitrary real number and $s \in \mathbb C$ satisfies
 $\sigma \le \tmop{Re(s)} <1$ and $|\tmop{Im}(s)|< T^{\frac14(\sigma-\frac12)}$. 
By this and \eqref{l1 bd} we get that
\begin{equation} \label{K bd}
\frac1T \int_T^{2T}  F_{\mathcal I_1}\Big(\tmop{Re} \log \zeta(s_1+it)\Big) \, dt
=\frac1T \int_{T}^{2T} \mathbf 1_{\mathcal I_1}\Big(\tmop{Re} \log \zeta(s_1+it)\Big) dt+O(1/\Delta).
\end{equation}
Lemma \ref{lem:functionbd} implies that $|F_{\mathcal I_j}(x)|, |F_{\mathcal J_j}(x)| \le 1$ for  $j=1,\ldots, J$. Hence, by this and \eqref{K bd}
\begin{equation} \notag
\begin{split}
&\frac1T \int_T^{2T} \prod_{j=1}^J F_{\mathcal I_j} \Big( \tmop{Re} \log \zeta(s_j+it)\Big) 
F_{\mathcal J_j}\Big( \tmop{Im} \log \zeta(s_j+it)\Big) \, dt \\
&=\frac1T \int_T^{2T} \mathbf 1_{\mathcal I_1} \Big( \tmop{Re} \log \zeta(s_j+it)\Big) 
F_{\mathcal J_1}\Big( \tmop{Im} \log \zeta(s_j+it)\Big) \\
&\qquad \qquad \qquad \times \prod_{j=2}^J F_{\mathcal I_j} \Big( \tmop{Re} \log \zeta(s_j+it)\Big) 
F_{\mathcal J_j}\Big( \tmop{Im} \log \zeta(s_j+it)\Big) \, dt+O(1/\Delta).
\end{split}
\end{equation}
Iterating this argument and using an analog of \eqref{K bd} for $\tmop{Im } \log \zeta(s+it)$, which
is proved in the same way, gives
\begin{equation} \label{one}
\begin{split}
&\frac1T \int_T^{2T} \prod_{j=1}^J F_{\mathcal I_j} \Big( \tmop{Re} \log \zeta(s_j+it)\Big) 
F_{\mathcal J_j}\Big( \tmop{Im} \log \zeta(s_j+it)\Big) \, dt \\
&\qquad \qquad \qquad
=\mathbb P_T\Bigg(\log \zeta(s_j+it) \in \mathcal R_j, \forall j \le J\bigg) +O\left(\frac{J}{\Delta}\right).
\end{split}
\end{equation}
Similarly, it can be shown that
\begin{equation} \label{two}
\begin{aligned}
\mathbb E \bigg( \prod_{j=1}^J F_{\mathcal I_j} \Big( \tmop{Re} \log \zeta(s_j,X)\Big) 
F_{\mathcal J_j}\Big( \tmop{Im} \log \zeta(s_j,X)\Big) \bigg) 
&=\mathbb P\Bigg(\log \zeta(s_j,X) \in \mathcal R_j, \forall j \le J  \bigg) \\
&\ \ \ +O\left(\frac{J}{\Delta}\right).
\end{aligned}
\end{equation}
Using \eqref{one} and \eqref{two} in \eqref{long est} completes the proof.

\end{proof}

\end{document}